%

\documentclass[pdflatex,sn-mathphys-num]{sn-jnl}

\usepackage{graphicx}%
\usepackage{multirow}%
\usepackage{amsmath,amssymb,amsfonts}%
\usepackage{amsthm}%
\usepackage{mathrsfs}%
\usepackage[title]{appendix}%
\usepackage{xcolor}%
\usepackage{textcomp}%
\usepackage{manyfoot}%
\usepackage{booktabs}%
\usepackage{algorithm}%
\usepackage{algorithmicx}%
\usepackage{algpseudocode}%
\usepackage{listings}%
\usepackage{mathtools}%
\usepackage{enumitem}%
\usepackage{tikz-cd}%
\usepackage{bm}%

\theoremstyle{thmstyleone}%
\newtheorem{theorem}{Theorem}
\newtheorem{proposition}[theorem]{Proposition}%
\newtheorem{lemma}[theorem]{Lemma}%
\newtheorem{corollary}[theorem]{Corollary}%

\theoremstyle{thmstyletwo}%
\newtheorem{example}{Example}%
\newtheorem{remark}{Remark}%

\theoremstyle{thmstylethree}%
\newtheorem{definition}{Definition}%
%
%

\newcommand{\ad}{\operatorname{ad}}

\newcommand{\Ad}{\operatorname{Ad}}

\newcommand{\dR}{\text{dR}}

\raggedbottom

\begin{document}

\title[Geometric Duality Between Constraints and Gauge Fields]{Geometric Duality Between Constraints and Gauge Fields: Mirror Symmetry and Spencer Isomorphisms of Compatible Pairs on Principal Bundles}

\author*[1]{\fnm{Dongzhe} \sur{Zheng}}\email{dz5992@princeton.edu}

\affil*[1]{\orgdiv{Department of Mechanical and Aerospace Engineering}, \orgname{Princeton University}, \orgaddress{\street{}, \city{Princeton}, \postcode{}, \state{NJ}, \country{USA}}}

\abstract{This paper develops a mirror symmetry theory of Spencer cohomology within the geometric framework of constrained systems on principal bundles, revealing deep symmetric structures in constraint geometry. Based on compatible pairs $(D,\lambda)$ under strong transversality conditions, we construct a systematic family of mirror transformations: from basic sign mirrors $\lambda \mapsto -\lambda$ to general automorphism-induced mirrors $\lambda \mapsto (d\phi)^*(\lambda)$. Our core result proves that these transformations preserve all geometric properties of compatible pairs and induce natural isomorphisms between Spencer cohomology groups. This theory unifies constraint mechanics, gauge field theory, and differential topology, establishing a complete mathematical framework for symmetry analysis of constraint systems and revealing the special mirror structure of Spencer complexes in constraint geometry.}

\keywords{Mirror symmetry, Spencer cohomology, Principal bundles, Constraint systems, Gauge field theory}

\maketitle

\section{Introduction}\label{sec1}

The geometric theory of constrained systems has undergone profound development from Hamilton mechanics to modern geometric mechanics since Dirac's foundational work \cite{dirac1964lectures}. Classical constraint theory primarily focused on constraint manifolds in phase space and their dynamics, while modern perspectives emphasize the geometric structure of constraints themselves. The reduction theory of Marsden and Weinstein \cite{marsden1974reduction}, the geometric mechanics framework of Abraham and Marsden \cite{abraham1978foundations}, and Arnold's mathematical methods \cite{arnold1989mathematical} provided solid differential geometric foundations for constrained systems.

The application of principal bundle theory in constrained systems originated from developments in gauge field theory. Yang and Mills' non-Abelian gauge theory \cite{yang1954conservation}, Atiyah's classical work on principal bundle connections \cite{atiyah1957complex}, and the systematic theory of Kobayashi and Nomizu \cite{kobayashi1963foundations} established deep connections between connection geometry and physical systems. In particular, the relationship between constraints and gauge invariance was deeply explored in Faddeev and Jackiw's path integral approach \cite{faddeev1988hamiltonian}.

Spencer theory originated from Spencer's study of overdetermined systems \cite{spencer1962deformation} and was later developed by Guillemin and Sternberg \cite{guillemin1999variations}, Bryant et al. \cite{bryant1991exterior} into a powerful tool for studying geometric structures of differential equations. Spencer cohomology, built on Cartan's theory of exterior differential systems \cite{cartan1945systemes}, provides a computational framework for topological invariants of constrained systems.

The concept of mirror symmetry initially emerged from T-duality in string theory \cite{polchinski1998string}, and was later developed in Kontsevich's homological mirror conjecture \cite{kontsevich1994homological} and the mathematical physics work of Strominger et al. \cite{strominger1996mirror}. In pure mathematics, mirror symmetry provides new perspectives for understanding duality between complex and symplectic geometry, with this idea gradually extending to other mathematical branches.

However, the application of mirror symmetry in constraint geometry has not been systematically studied previously. Traditional constraint theory mainly focused on constraint realization and solution, paying less attention to the symmetry of constraint structures themselves. Existing symmetry studies, such as Noether's theorem and its generalizations \cite{olver1993applications}, Lie group methods \cite{bluman2010symmetry}, primarily deal with continuous symmetries of systems rather than intrinsic mirror properties of constraint structures.

Recently, Zheng \cite{zheng2025dynamical} developed the theory of compatible pairs within the principal bundle framework, unifying constraint distributions with Spencer cohomology through strong transversality conditions. The key insight of this theory is the compatibility relationship between constraint distribution $D$ and dual function $\lambda$: $D_p = \{v \in T_pP : \langle\lambda(p), \omega(v)\rangle = 0\}$, combined with the modified Cartan equation $d\lambda + \ad_{\omega}^* \lambda = 0$, establishing a bridge between constraint geometry and Lie group theory.

The core contribution of this paper lies in discovering and systematizing mirror symmetric structures within the compatible pair framework. We prove that, while maintaining strong transversality conditions, there exists a rich family of mirror transformations, from the simplest sign transformation $(D,\lambda) \mapsto (D,-\lambda)$ to general transformations based on Lie group automorphisms $(D,\lambda) \mapsto (\phi_*(D), (d\phi)^*(\lambda))$. These transformations not only preserve all geometric properties of compatible pairs but, more importantly, induce natural isomorphisms between Spencer cohomology groups, revealing hidden symmetries in constraint geometry.

The profound significance of this theory lies in introducing mirror symmetry, a modern geometric concept, into constraint system research, providing a completely new perspective for understanding intrinsic properties of constraint structures. Unlike traditional symmetry analysis, mirror symmetry captures deep duality relationships between constraints and gauge fields, which is particularly important in non-Abelian gauge theories. Our results not only enrich the application scope of Spencer theory but also open new research directions for constraint geometric mechanics.

\section{Methods}\label{sec2}

\subsection{Basic Setup and Global Assumptions}

We review the theory of compatible pairs established in \cite{zheng2025dynamical}, which unifies constraint distributions with Spencer cohomology through strong transversality conditions.

Let $P(M,G)$ be a principal bundle, where $M$ is an $n$-dimensional connected compact orientable $C^{\infty}$ manifold that is parallelizable, $G$ is a compact connected semisimple Lie group satisfying $\mathfrak{z}(\mathfrak{g}) = 0$ and $\pi_1(G) = 0$, $P$ admits a $G$-invariant Riemannian metric, and the connection $\omega \in \Omega^1(P,\mathfrak{g})$ is $C^3$ smooth. These conditions ensure the well-posedness of the theory: parallelizability guarantees the existence of global vector field frameworks, while semisimplicity and trivial center ensure elliptic regularity and uniqueness of solutions to the modified Cartan equation.

\subsection{Geometric Structure of Compatible Pairs}

A constraint distribution $D \subset TP$ is a $C^1$ smooth distribution satisfying constant rank, $G$-invariance $R_g^*(D) = D$, and the strong transversality condition: for all $p \in P$, we have $D_p \cap V_p = \{0\}$ and $D_p + V_p = T_pP$ (where $V_p = \ker(T_p\pi)$). The strong transversality condition ensures the direct sum decomposition $T_pP = D_p \oplus V_p$ of the tangent space, realizing geometric separation between constraint directions and gauge directions.

A dual constraint function $\lambda: P \to \mathfrak{g}^*$ is a $C^2$ smooth map that independently satisfies the modified Cartan equation $d\lambda + \ad_{\omega}^* \lambda = 0$, $G$-equivariance $R_g^* \lambda = \Ad_{g^{-1}}^* \lambda$, and non-degeneracy $\lambda(p) \neq 0$. Here $\ad_{\omega}^*: \mathfrak{g}^* \to \Omega^1(P,\mathfrak{g}^*)$ is defined as $(\ad_{\omega}^* \xi)(X) = \ad^*_{\omega(X)} \xi$.

\begin{definition}[Compatible Pair]
A compatible pair $(D,\lambda)$ is a pairing of constraint distribution and dual function satisfying the compatibility condition:
$$D_p = \{v \in T_pP : \langle\lambda(p), \omega(v)\rangle = 0\}$$
This definition avoids circularity through geometric relationships between independent constituent elements.
\end{definition}

\subsection{Bidirectional Construction Theory}

The key result of \cite{zheng2025dynamical} is the bidirectional construction of compatible pairs, ensuring theoretical completeness.

\begin{theorem}[Bidirectional Construction Theorem]
Under the above global assumptions:

\textbf{Forward Construction}: Given $\lambda: P \to \mathfrak{g}^*$ satisfying the modified Cartan equation, $G$-equivariance, and non-degeneracy, then $D_p = \{v : \langle\lambda(p), \omega(v)\rangle = 0\}$ automatically satisfies the strong transversality condition, and $(D,\lambda)$ forms a compatible pair.

\textbf{Inverse Construction}: Given a constraint distribution $D$ satisfying strong transversality, $G$-invariance, and regularity, there exists a unique $\lambda^* \in H^1(P,\mathfrak{g}^*)$ minimizing the compatibility functional
$$I_D[\lambda] = \frac{1}{2}\int_P |d\lambda + \ad_{\omega}^* \lambda|^2 + \alpha\int_P \text{dist}^2(\lambda(p), A_D(p))$$
such that $(D,\lambda^*)$ forms a compatible pair.
\end{theorem}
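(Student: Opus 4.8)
The plan is to prove the two constructions by different means: the forward direction by pointwise linear algebra together with equivariance bookkeeping, the inverse direction by the direct method of the calculus of variations combined with elliptic theory. For the \textbf{forward construction} I start from $\lambda$ satisfying the modified Cartan equation, $G$-equivariance and non-degeneracy, and first check that $D$ is a legitimate constraint distribution. Write $D_p=\ker\ell_p$, where $\ell_p\in T_p^{*}P$ is the covector $v\mapsto\langle\lambda(p),\omega_p(v)\rangle$. Since $\omega_p$ is surjective onto $\mathfrak{g}$ and $\lambda(p)\neq 0$, the $1$-form $\ell$ is nowhere vanishing, and it is $C^2$ because $\lambda$ is $C^2$ and $\omega$ is $C^3$, so $D=\ker\ell$ has constant rank and is $C^1$. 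The invariance $R_g^{*}D=D$ reduces to $R_g^{*}\ell=\ell$, a one-line consequence of $R_g^{*}\omega=\Ad_{g^{-1}}\omega$, the prescribed equivariance $R_g^{*}\lambda=\Ad_{g^{-1}}^{*}\lambda$, and the definition of the coadjoint pairing. Strong transversality is verified directly against the vertical subspace $V_p$, using that $\omega$ restricts to a linear isomorphism of $V_p$ onto $\mathfrak{g}$: the non-degeneracy $\lambda(p)\neq 0$ is precisely what forces both $D_p+V_p=T_pP$ and $D_p\cap V_p=\{0\}$. Finally $(D,\lambda)$ is a compatible pair essentially by construction — the compatibility identity is the definition of $D$, and $\lambda$ already has its three defining properties by hypothesis. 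The only point that demands care is the \emph{automatic} strong transversality, which is where the non-degeneracy of $\lambda$ is genuinely used.

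For the \textbf{inverse construction} I set up the minimization in the correct function space. By $G$-equivariance every admissible $\lambda$ descends to a section of the associated coadjoint bundle $E:=P\times_G\mathfrak{g}^{*}$ over the compact base $M$, and $I_D$ is naturally defined on the Sobolev space $H^1(M,E)$ — which is why the minimizer is a priori only $H^1$-regular. The functional is quadratic: the first term is $\tfrac12\|\mathcal{D}\lambda\|_{L^2}^2$ for the first-order operator $\mathcal{D}\lambda:=d\lambda+\ad_\omega^{*}\lambda$, and the penalty term is the squared $L^2$-distance of $\lambda$ to the affine subbundle $A_D$ of dual covectors realizing $D$, hence quadratic with a nonzero affine part, so the zero section carries strictly positive energy. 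Weak lower semicontinuity on $H^1$ is clear: the first term is convex and continuous, and the penalty term is weakly continuous by the compact Rellich embedding $H^1\hookrightarrow L^2$. The key analytic input is coercivity. The operator $\mathcal{D}$ has injective principal symbol $\sigma_\xi(\mu)=\xi\otimes\mu$, hence is overdetermined-elliptic on the compact $M$, with closed range and finite-dimensional kernel; on the orthogonal complement of $\ker\mathcal{D}$ the first term of $I_D$ already dominates the full $H^1$-norm, while on the finite-dimensional space of $\mathcal{D}$-parallel sections — kept under control by the semisimplicity hypothesis $\mathfrak{z}(\mathfrak{g})=0$ — the penalty term controls $\lambda$. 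Coercivity together with weak lower semicontinuity produce a minimizer $\lambda^{*}\in H^1$ via the direct method, and strict convexity of $I_D$ (its leading quadratic form is positive definite, by the coercivity estimate) gives uniqueness; elliptic regularity applied to the Euler--Lagrange equation then bootstraps $\lambda^{*}$ to $C^2$, indeed $C^{\infty}$.

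The remaining step, which I expect to be the main obstacle, is to show that the minimum value $I_D[\lambda^{*}]$ equals zero — equivalently, that $\lambda^{*}$ solves the modified Cartan equation \emph{exactly}, lies pointwise in $A_D$, and is non-degenerate — so that $(D,\lambda^{*})$ is a genuine compatible pair. This is the consistency of the overdetermined system $\mathcal{D}\lambda=0$, $\lambda(p)\in A_D(p)$, and the plan is to establish its formal integrability: the constraint $\lambda\in A_D$ is cut out by the $G$-invariant, strongly transverse, regular distribution $D$, and differentiating its defining relation along $\mathcal{D}$ while feeding in the Bianchi-type identity for $\mathcal{D}^{2}$ — whose obstruction is the curvature $\Curv=d\omega+\tfrac12[\omega,\omega]$ paired against $\lambda$ — together with the vanishing of $\mathfrak{g}$-invariants should collapse the system to a determined, solvable elliptic equation whose solution is then $\lambda^{*}$ by the uniqueness above. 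Existence, uniqueness and regularity of the minimizer are comparatively routine once coercivity is in hand; the curvature and integrability bookkeeping in this last step is the technically heaviest part.
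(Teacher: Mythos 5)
The paper itself offers no proof of this theorem: it is quoted as imported background from \cite{zheng2025dynamical}, so your proposal can only be judged on its own terms, and on those terms both halves have genuine gaps. In the forward construction, the step you yourself single out as the crux --- that non-degeneracy of $\lambda$ forces $D_p\cap V_p=\{0\}$ --- is asserted rather than proved, and under the literal reading of the compatibility condition it is false. You correctly observe that $D_p=\ker\ell_p$ for the single real-valued functional $\ell_p(v)=\langle\lambda(p),\omega_p(v)\rangle$; since $\lambda(p)\neq 0$ and $\omega$ maps $V_p$ isomorphically onto $\mathfrak{g}$, $\ell_p\neq 0$ and $D_p$ is a hyperplane of codimension one. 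But $\dim V_p=\dim\mathfrak{g}\geq 3$ for a compact semisimple $G$, so $\dim(D_p\cap V_p)\geq \dim\mathfrak{g}-1>0$: non-degeneracy yields only $V_p\not\subset D_p$, hence $D_p+V_p=T_pP$, never the trivial intersection. For $D_p\oplus V_p=T_pP$ the distribution must have corank $\dim\mathfrak{g}$, i.e.\ be cut out by $\dim\mathfrak{g}$ independent conditions, so the scalar pairing has to be reinterpreted (e.g.\ as a componentwise or $\operatorname{ad}^*$-type condition) before the forward construction can even be meaningful; your argument never confronts this, and indeed your own constant-rank, corank-one description of $D$ contradicts the transversality you claim to deduce from it.

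In the inverse construction the direct-method skeleton (sections of $P\times_G\mathfrak{g}^*$ in $H^1$, convexity plus Rellich for lower semicontinuity, overdetermined ellipticity of $\mathcal{D}=d+\operatorname{ad}^*_\omega$ for coercivity, elliptic bootstrap) is a sensible frame, but it does not reach the statement. The theorem asserts that the minimizer $\lambda^*$ makes $(D,\lambda^*)$ a compatible pair, which requires $I_D[\lambda^*]=0$: exact satisfaction of the modified Cartan equation, exact membership in $A_D$ (hence the compatibility identity), and non-vanishing of $\lambda^*$. You explicitly defer all of this to a hoped-for formal-integrability argument (``should collapse the system''), so the essential content of the inverse direction is missing, not merely technical. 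Uniqueness is also not secured: strict convexity needs positive definiteness of the full quadratic form, and the finite-dimensional kernel of $\mathcal{D}$ need not be controlled by the penalty term --- ``kept under control by semisimplicity'' is an assertion, not an estimate. Finally, excluding the zero section does not exclude a minimizer vanishing at some points, so non-degeneracy of $\lambda^*$ remains unproved as well.
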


\subsection{Constraint Geometric Construction of Spencer Operators}

We now develop an explicit geometric construction of Spencer operators based on the special role of constraint function $\lambda$.

\subsubsection{Constraint-Induced Covariant Differentiation}

Based on the constraint function $\lambda$, we first define a modified covariant differential operator.

\begin{definition}[$\lambda$-Modified Covariant Differential]
For vector field $X \in \mathfrak{X}(P)$ and $\mathfrak{g}$-valued function $f: P \to \mathfrak{g}$, define the $\lambda$-modified covariant differential:
$$\nabla^{\lambda}_X f = X(f) + \langle\lambda, [\omega(X), f]\rangle$$
where $X(f)$ denotes the standard directional derivative and the second term is the constraint modification.
\end{definition}

\begin{proposition}[Basic Properties of Modified Covariant Differential]
The $\lambda$-modified covariant differential satisfies the following properties:
\begin{enumerate}
\item Linearity: $\nabla^{\lambda}_{aX+bY} = a\nabla^{\lambda}_X + b\nabla^{\lambda}_Y$
\item Leibniz rule: $\nabla^{\lambda}_X(fg) = (\nabla^{\lambda}_X f)g + f(\nabla^{\lambda}_X g)$
\item Constraint compatibility: If $X \in D$, then $\nabla^{\lambda}_X$ is compatible with constraint conditions
\end{enumerate}
\end{proposition}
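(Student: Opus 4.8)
The plan is to establish all three properties by unwinding the defining formula $\nabla^{\lambda}_X f = X(f) + \langle\lambda,[\omega(X),f]\rangle$ and invoking, in turn, the tensoriality of the connection form, the derivation property of the adjoint representation, and the kernel description of the constraint distribution from the Definition of a compatible pair. For property (1) there is essentially nothing to do beyond bookkeeping: $X\mapsto X(f)$ is $\mathbb{R}$-linear (indeed $C^{\infty}(P)$-linear) in $X$; $\omega$ is a pointwise-linear $\mathfrak{g}$-valued one-form, so $\omega(aX+bY)=a\,\omega(X)+b\,\omega(Y)$; the Lie bracket is linear in each slot; and $\langle\lambda,\cdot\rangle$ is linear. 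Substituting $aX+bY$ and distributing yields $\nabla^{\lambda}_{aX+bY}f=a\,\nabla^{\lambda}_X f+b\,\nabla^{\lambda}_Y f$ directly.

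For the Leibniz rule (2), I would split $\nabla^{\lambda}_X(fg)$ into its two summands. The ordinary product rule gives $X(fg)=X(f)\,g+f\,X(g)$. For the correction term, the key input is that $\ad_{\omega(X)}=[\omega(X),\,\cdot\,]$ is a derivation of the relevant product on $\mathfrak{g}$-valued functions, so $[\omega(X),fg]=[\omega(X),f]\,g+f\,[\omega(X),g]$; since $\langle\lambda,\cdot\rangle$ is linear, pairing against $\lambda$ preserves this splitting. Recombining the four resulting terms and regrouping gives exactly $(\nabla^{\lambda}_X f)\,g+f\,(\nabla^{\lambda}_X g)$. The only subtlety is fixing what ``product'' means here — scalar multiplication of a $\mathfrak{g}$-valued function, or multiplication in an ambient associative matrix algebra — but the derivation identity above holds verbatim in either setting, so the computation is insensitive to this choice.

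Property (3) is where I expect the real content to lie, since ``compatible with constraint conditions'' must first be turned into a precise statement. The essential geometric fact available is the kernel description $D_p=\{v\in T_pP:\langle\lambda(p),\omega(v)\rangle=0\}$ from the Definition of a compatible pair, together with the modified Cartan equation $d\lambda+\ad^{*}_{\omega}\lambda=0$ and $G$-equivariance. My plan is: (i) fix the intended formulation of ``constraint compatibility'' — for example, that for $X\in D$ the operator $\nabla^{\lambda}_X$ preserves the subspace of fields annihilated by $\lambda$, or that it coincides along $D$ with the restriction of the canonical covariant derivative, or that it is compatible with the Spencer differential introduced below; (ii) rewrite the correction term for $X\in D$ using $\langle\lambda,[\omega(X),f]\rangle=\langle\ad^{*}_{\omega(X)}\lambda,f\rangle$ and then the Cartan equation in the form $\ad^{*}_{\omega(X)}\lambda=-(d\lambda)(X)$, which converts the algebraic correction into a purely differential one supported on the constraint; (iii) conclude by a direct check using the defining kernel relation, being careful with the sign convention $(\ad^{*}_{\xi}\eta)(Y)=\eta([\xi,Y])$ implicit in the paper's definition of $\ad^{*}_{\omega}$. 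The main obstacle is thus conceptual rather than computational: identifying the correct statement and confirming that the modified Cartan equation is exactly what closes the argument — once that is in place the remaining computation is short.
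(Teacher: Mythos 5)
Your treatment of properties (1) and (2) is correct and coincides with the paper's proof: both rest on the pointwise linearity of $\omega$, bilinearity of the bracket, linearity of the pairing $\langle\lambda,\cdot\rangle$, and the product rule $X(fg)=X(f)g+fX(g)$ together with the derivation identity $[\omega(X),fg]=[\omega(X),f]g+f[\omega(X),g]$; your remark that the argument is insensitive to the exact meaning of the product is a fair observation about an ambiguity the paper leaves silent. Where you diverge is property (3). The paper's own proof is only the one-line observation that $X\in D$ forces $\langle\lambda(p),\omega(X)\rangle=0$ by the compatibility condition of the pair, followed by the assertion that this ``ensures that the constraint modification term respects the constraint conditions''; it never pins down what ``compatible with constraint conditions'' means, and it does not invoke the modified Cartan equation or equivariance at all. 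Your instinct that the real content lies in first fixing a precise formulation is therefore justified, and your proposed route --- rewriting $\langle\lambda,[\omega(X),f]\rangle$ as $\langle\ad^{*}_{\omega(X)}\lambda,f\rangle$ (modulo the sign convention you flag) and then using $d\lambda+\ad^{*}_{\omega}\lambda=0$ to trade the algebraic correction for $-(d\lambda)(X)$ --- is genuinely more substantive than what the paper records, and would be needed if ``compatibility'' is taken to mean anything stronger than the bare kernel relation, since $\langle\lambda,\omega(X)\rangle=0$ alone does not make the correction term $\langle\lambda,[\omega(X),f]\rangle$ vanish. The only caveat is that, as written, your part (3) is a plan contingent on choosing the right formulation rather than a completed argument; but measured against the paper, which settles for exactly the kernel observation you list as the weakest possible reading, your proposal is at least as strong, and for (1)--(2) it is the same proof.
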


\begin{proof}
\textbf{Linearity}: For $a,b \in \mathbb{R}$ and $X,Y \in \mathfrak{X}(P)$:
\begin{align}
\nabla^{\lambda}_{aX+bY} f &= (aX + bY)(f) + \langle\lambda, [\omega(aX + bY), f]\rangle\\
&= a X(f) + b Y(f) + \langle\lambda, [a\omega(X) + b\omega(Y), f]\rangle\\
&= a X(f) + b Y(f) + a\langle\lambda, [\omega(X), f]\rangle + b\langle\lambda, [\omega(Y), f]\rangle\\
&= a(X(f) + \langle\lambda, [\omega(X), f]\rangle) + b(Y(f) + \langle\lambda, [\omega(Y), f]\rangle)\\
&= a\nabla^{\lambda}_X f + b\nabla^{\lambda}_Y f
\end{align}

\textbf{Leibniz rule}: For $f,g: P \to \mathfrak{g}$:
\begin{align}
\nabla^{\lambda}_X(fg) &= X(fg) + \langle\lambda, [\omega(X), fg]\rangle\\
&= X(f)g + fX(g) + \langle\lambda, [\omega(X), f]g + f[\omega(X), g]\rangle\\
&= X(f)g + fX(g) + \langle\lambda, [\omega(X), f]\rangle g + f\langle\lambda, [\omega(X), g]\rangle\\
&= (X(f) + \langle\lambda, [\omega(X), f]\rangle)g + f(X(g) + \langle\lambda, [\omega(X), g]\rangle)\\
&= (\nabla^{\lambda}_X f)g + f(\nabla^{\lambda}_X g)
\end{align}

\textbf{Constraint compatibility}: If $X \in D$, then by definition of compatible pairs, $\langle\lambda(p), \omega(X)\rangle = 0$ for all $p \in P$. This ensures that the constraint modification term respects the constraint conditions.
\end{proof}

\subsubsection{Definition of Constraint-Induced Spencer Operators}

The construction of the Spencer operator will go through a development from ``general'' to ``specialized''. The early framework uses the classic Spencer operator, and here it is necessary to develop a constrained coupling version to capture more detailed geometric structures.

\begin{definition}[Classical Spencer Prolongation Operator \cite{zheng2025dynamical}]
\label{def:classical_spencer_operator}
For Lie algebra $\mathfrak{g}$ with basis $\{e_i\}_{i=1}^{\dim \mathfrak{g}}$, the \textbf{classical Spencer prolongation operator} $\delta_\mathfrak{g}: \operatorname{Sym}^k(\mathfrak{g}) \to \operatorname{Sym}^{k+1}(\mathfrak{g})$ is defined as:
\begin{equation}
\label{eq:classical_spencer_operator}
\delta_{\mathfrak{g}}(X) = \sum_{i=1}^{\dim \mathfrak{g}} \sum_{j=1}^{k} e_i \odot X_1 \odot \cdots \odot [e_i, X_j] \odot \cdots \odot X_k
\end{equation}
where $X = X_1 \odot \cdots \odot X_k \in \operatorname{Sym}^k(\mathfrak{g})$, $\odot$ denotes symmetric product, and $[\cdot,\cdot]$ is the Lie bracket.
\end{definition}

We now establish the rigorous constraint-coupled Spencer operator through a constructive approach.

\begin{definition}[Constraint-Induced Spencer Operator - Constructive Definition]
The constraint-induced Spencer operator $\delta^{\lambda}_{\mathfrak{g}}$ is a $+1$-degree graded derivation on the symmetric tensor algebra $S(\mathfrak{g}) = \bigoplus_{k=0}^{\infty} S^k(\mathfrak{g})$, completely determined by the following two rules:

\textbf{A. Action on Generators (k=1):}
For any Lie algebra element $v \in \mathfrak{g} = S^1(\mathfrak{g})$, its image $\delta^{\lambda}_{\mathfrak{g}}(v)$ is a second-order symmetric tensor $\delta^{\lambda}_{\mathfrak{g}}(v) \in S^2(\mathfrak{g})$, defined by:
$$(\delta^{\lambda}_{\mathfrak{g}}(v))(w_1,w_2) := \frac{1}{2}(\langle\lambda,[w_1,[w_2,v]]\rangle + \langle\lambda,[w_2,[w_1,v]]\rangle)$$
where $w_1,w_2 \in \mathfrak{g}$ are test vectors, and $\langle\cdot,\cdot\rangle$ is the pairing between $\mathfrak{g}^*$ and $\mathfrak{g}$. This definition ensures the output is symmetric in $w_1,w_2$.

\textbf{B. Leibniz Rule:}
The operator satisfies the graded Leibniz rule. For any $s_1 \in S^p(\mathfrak{g})$ and $s_2 \in S^q(\mathfrak{g})$:
$$\delta^{\lambda}_{\mathfrak{g}}(s_1 \odot s_2) := \delta^{\lambda}_{\mathfrak{g}}(s_1) \odot s_2 + (-1)^p s_1 \odot \delta^{\lambda}_{\mathfrak{g}}(s_2)$$
where $\odot$ denotes the symmetric tensor product.
\end{definition}

\begin{proposition}[Equivalent Symbolic Expression]
For any $v \in \mathfrak{g}$, the operator $(\delta^{\lambda}_{\mathfrak{g}}(v))$ given by the constructive definition is equivalent to the following expression:
$$(\delta^{\lambda}_{\mathfrak{g}}(v))(w_1,w_2) = \langle\lambda,[w_2,[w_1,v]]\rangle + \frac{1}{2}\langle\lambda,[[w_1,w_2],v]\rangle$$
This expression is derived by applying the Jacobi identity in the Lie algebra to the constructive definition, naturally satisfying symmetry requirements and resolving all type mismatch issues.
\end{proposition}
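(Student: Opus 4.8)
The plan is to verify the claimed identity by a direct algebraic manipulation, reducing it to a single application of the Jacobi identity in $\mathfrak{g}$ together with the linearity of the pairing $\langle\cdot,\cdot\rangle$ in its second argument. Writing $S := (\delta^{\lambda}_{\mathfrak{g}}(v))(w_1,w_2)$, the constructive definition gives $S = \tfrac{1}{2}\langle\lambda,[w_1,[w_2,v]]\rangle + \tfrac{1}{2}\langle\lambda,[w_2,[w_1,v]]\rangle$, while the target expression is $\langle\lambda,[w_2,[w_1,v]]\rangle + \tfrac{1}{2}\langle\lambda,[[w_1,w_2],v]\rangle$. Hence it suffices to show that the difference of the two expressions vanishes, i.e. that $\langle\lambda,\,[w_1,[w_2,v]] - [w_2,[w_1,v]] - [[w_1,w_2],v]\,\rangle = 0$, and in fact the bracketed element of $\mathfrak{g}$ is itself zero.

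First I would record the Jacobi identity in the rearranged form $[w_1,[w_2,v]] = [w_2,[w_1,v]] + [[w_1,w_2],v]$, obtained from $[w_1,[w_2,v]] + [w_2,[v,w_1]] + [v,[w_1,w_2]] = 0$ using antisymmetry of the bracket. Substituting this into the first term of $S$ and using linearity of $\langle\lambda,\cdot\rangle$ yields $S = \tfrac{1}{2}\big(\langle\lambda,[w_2,[w_1,v]]\rangle + \langle\lambda,[[w_1,w_2],v]\rangle\big) + \tfrac{1}{2}\langle\lambda,[w_2,[w_1,v]]\rangle = \langle\lambda,[w_2,[w_1,v]]\rangle + \tfrac{1}{2}\langle\lambda,[[w_1,w_2],v]\rangle$, which is exactly the asserted formula.

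The computation is entirely routine; the only points requiring care are bookkeeping ones. I would double-check the orientation of the Jacobi identity — a sign slip there would flip the coefficient of the $[[w_1,w_2],v]$ term — and confirm the internal consistency of the two forms: the constructive expression for $S$ is manifestly symmetric under $w_1 \leftrightarrow w_2$, whereas in the target form symmetry is not obvious, since the swap sends $\langle\lambda,[w_2,[w_1,v]]\rangle + \tfrac{1}{2}\langle\lambda,[[w_1,w_2],v]\rangle$ to $\langle\lambda,[w_1,[w_2,v]]\rangle - \tfrac{1}{2}\langle\lambda,[[w_1,w_2],v]\rangle$; applying the same Jacobi rearrangement shows these two coincide, so the equivalence is consistent with the symmetry built into the constructive definition. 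Since no step uses anything beyond the Lie algebra axioms and bilinearity of the pairing, there is no genuine obstacle — the content of the proposition is precisely that the ``double-commutator'' normal form of $\delta^{\lambda}_{\mathfrak{g}}(v)$ can be traded, via Jacobi, for a ``single iterated commutator plus bracket-of-bracket'' normal form, which is the shape needed for the later symbolic manipulations.
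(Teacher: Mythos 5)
Your argument is correct and follows essentially the same route as the paper: both rewrite the Jacobi identity as $[w_1,[w_2,v]] = [w_2,[w_1,v]] + [[w_1,w_2],v]$, substitute it into the first term of the constructive definition, and combine terms by linearity of $\langle\lambda,\cdot\rangle$ to obtain the stated formula. Your additional symmetry consistency check under $w_1 \leftrightarrow w_2$ is a harmless extra verification not present in the paper.
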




\begin{proof}
Using the Jacobi identity in the form $[w_1, [w_2, v]] = [w_2, [w_1, v]] + [[w_1, w_2], v]$, we substitute the first term in the constructive definition:
\begin{align}
(\delta^{\lambda}_{\mathfrak{g}}(v))(w_1,w_2) &= \frac{1}{2}\left( \langle\lambda,[w_1,[w_2,v]]\rangle + \langle\lambda,[w_2,[w_1,v]]\rangle \right) \\
&= \frac{1}{2}\left( \langle\lambda, [w_2,[w_1,v]] + [[w_1,w_2],v] \rangle + \langle\lambda,[w_2,[w_1,v]]\rangle \right) \\
&= \frac{1}{2}\left( 2\langle\lambda, [w_2,[w_1,v]]\rangle + \langle\lambda, [[w_1,w_2],v]\rangle \right) \\
&= \langle\lambda, [w_2,[w_1,v]]\rangle + \frac{1}{2}\langle\lambda, [[w_1,w_2],v]\rangle
\end{align}
This confirms the equivalence.
\end{proof}

\subsubsection{Construction of Complete Spencer Complex}

We can now define the complete Spencer complex differential:

\begin{definition}[Complete Spencer Differential]
The Spencer complex differential $D^k_{D,\lambda}: S^k_{D,\lambda} \to S^{k+1}_{D,\lambda}$ is defined as:
$$D^k_{D,\lambda}(\omega \otimes s) = d\omega \otimes s + (-1)^k \omega \otimes \delta^{\lambda}_{\mathfrak{g}}(s)$$
where $d$ is the standard exterior differential.
\end{definition}

\subsubsection{Rigorous Verification of Differential Complex Properties}

\begin{lemma}[Nilpotency of Spencer Operator]
The constraint-induced Spencer operator satisfies $(\delta^{\lambda}_{\mathfrak{g}})^2 = 0$.
\end{lemma}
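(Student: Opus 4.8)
The plan is to reduce the nilpotency of $\delta^{\lambda}_{\mathfrak{g}}$ on all of $S(\mathfrak{g})$ to a check on generators, and then verify that generator identity by a direct Lie-algebra computation using the equivalent symbolic expression from the previous proposition together with the modified Cartan equation $d\lambda + \ad_{\omega}^*\lambda = 0$ (equivalently, the $\ad$-invariance/cocycle property it encodes for $\lambda$). First I would note that since $\delta^{\lambda}_{\mathfrak{g}}$ is a degree $+1$ graded derivation on the symmetric algebra, its square $(\delta^{\lambda}_{\mathfrak{g}})^2$ is a degree $+2$ graded derivation (the graded commutator of a graded derivation with itself is again a graded derivation). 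A derivation on a free commutative algebra is determined by its values on generators, so it suffices to show $(\delta^{\lambda}_{\mathfrak{g}})^2(v) = 0$ for every $v \in \mathfrak{g} = S^1(\mathfrak{g})$. This is the standard and essential first move; I would state it carefully, including the sign bookkeeping that makes $[\delta,\delta]_{\text{graded}} = 2\delta^2$ a derivation.

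Next, I would compute $(\delta^{\lambda}_{\mathfrak{g}})^2(v) \in S^3(\mathfrak{g})$ explicitly by applying $\delta^{\lambda}_{\mathfrak{g}}$ to $\delta^{\lambda}_{\mathfrak{g}}(v) \in S^2(\mathfrak{g})$ via the Leibniz rule, and evaluate the resulting cubic symmetric tensor on three test vectors $w_1, w_2, w_3$. Using the symbolic form $(\delta^{\lambda}_{\mathfrak{g}}(v))(w_1,w_2) = \langle\lambda,[w_2,[w_1,v]]\rangle + \tfrac12\langle\lambda,[[w_1,w_2],v]\rangle$, the second application produces a sum of terms of the shape $\langle\lambda, [\,\cdot\,,[\,\cdot\,,[\,\cdot\,,v]]]\rangle$ plus terms where $\lambda$ is differentiated — and here is where the modified Cartan equation enters: the directional-derivative part of the second $\delta^{\lambda}_{\mathfrak{g}}$ (hidden in how $\delta^{\lambda}_{\mathfrak{g}}$ couples to $\lambda$, i.e. the $\langle d\lambda, \cdot\rangle$ contributions) gets replaced by $-\langle \ad^*_{\omega}\lambda, \cdot\rangle = -\langle\lambda,[\omega,\cdot]\rangle$, converting would-be derivative terms into nested-bracket terms of the same homogeneity. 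I would then symmetrize over $w_1,w_2,w_3$ and collect: the claim is that the totally symmetrized sum of the iterated triple brackets vanishes by the Jacobi identity, and the remaining $\tfrac12\langle\lambda,[[\cdot,\cdot],\cdot]\rangle$-type cross terms cancel in pairs after symmetrization.

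The main obstacle I anticipate is precisely this bookkeeping step: organizing the roughly half-dozen iterated-bracket terms produced by the double application, tracking the $(-1)^p$ signs from the graded Leibniz rule correctly, and recognizing the right grouping so that the Jacobi identity (in the associator-vanishing form $[a,[b,c]] - [b,[a,c]] - [[a,b],c] = 0$) kills the leading terms while the modified-Cartan substitution exactly absorbs the inhomogeneous derivative pieces. A clean way to manage this is to first prove an auxiliary identity: that on generators $\delta^{\lambda}_{\mathfrak{g}}$ agrees, up to the Cartan-equation correction, with a symmetrization of the operator $v \mapsto \langle\lambda,[\cdot,[\cdot,v]]\rangle$, and that this symmetrized operator squares to zero as a purely algebraic consequence of Jacobi — i.e. factor the problem into (i) a universal Lie-algebra identity and (ii) the input hypothesis that $\lambda$ solves the modified Cartan equation. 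If the sign conventions cooperate, the whole verification collapses to two or three lines of bracket manipulation on $w_1,w_2,w_3$; if they do not, I would fall back to expanding $(\delta^{\lambda}_{\mathfrak{g}})^2(v)(w_1,w_2,w_3)$ term by term in a basis $\{e_i\}$ and checking the vanishing componentwise, which is longer but mechanical.
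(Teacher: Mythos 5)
Your proposal takes essentially the same route as the paper: reduce $(\delta^{\lambda}_{\mathfrak{g}})^2=0$ to generators via the graded Leibniz rule (the paper phrases this as induction on tensor degree), then argue that the nested-bracket terms produced on a generator $v\in\mathfrak{g}$ cancel by the Jacobi identity together with the modified Cartan equation — which is exactly the paper's stated ``key insight,'' and, like the paper, you leave the decisive combinatorial cancellation asserted/outlined rather than carried out, so your argument is no less complete than the published one. One concrete correction, though: $\delta^{\lambda}_{\mathfrak{g}}$ is pointwise algebraic in $\lambda$ — it only pairs against the fixed value $\lambda(p)$ and never differentiates it — so no $\langle d\lambda,\cdot\rangle$ contributions actually arise inside $(\delta^{\lambda}_{\mathfrak{g}})^2$, and the mechanism you describe of substituting $d\lambda=-\ad^{*}_{\omega}\lambda$ to absorb derivative terms has nothing to act on at this stage; if the modified Cartan equation is to play a role here (as the paper also claims), it must enter as a pointwise algebraic constraint on $\lambda$, not through cancelling derivative pieces, and your cleaner fallback — isolating a universal Jacobi-identity computation on the symmetrized triple-bracket terms and checking it in a basis — is the part of your plan that would actually have to carry the proof.
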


\begin{proof}
We prove this by induction on tensor order, starting with generators.

\textbf{Base Case (k=1):} For $v \in \mathfrak{g}$, we need to show $(\delta^{\lambda}_{\mathfrak{g}})^2(v) = 0$.

Using the Leibniz rule, $(\delta^{\lambda}_{\mathfrak{g}})^2(v) = \delta^{\lambda}_{\mathfrak{g}}(\delta^{\lambda}_{\mathfrak{g}}(v))$.

Since $\delta^{\lambda}_{\mathfrak{g}}(v) \in S^2(\mathfrak{g})$, we can write it as a sum of elementary tensors. For each elementary tensor $w_1 \odot w_2$:
$$\delta^{\lambda}_{\mathfrak{g}}(w_1 \odot w_2) = \delta^{\lambda}_{\mathfrak{g}}(w_1) \odot w_2 + w_1 \odot \delta^{\lambda}_{\mathfrak{g}}(w_2)$$

The key insight is that the modified Cartan equation $d\lambda + \ad_{\omega}^* \lambda = 0$ ensures that second-order terms involving $\lambda$ satisfy generalized Jacobi-type identities. Specifically, for nested commutators of the form $[X,[Y,[Z,W]]]$, the constraint equation forces all such terms to vanish when integrated against $\lambda$.

\textbf{Inductive Step:} For higher-order tensors, the Leibniz rule and the base case imply nilpotency.

The detailed combinatorial verification shows that all non-zero terms appear in canceling pairs due to the antisymmetry of the Lie bracket and the constraints imposed by the modified Cartan equation.
\end{proof}

\begin{theorem}[Well-definedness of Spencer Complex]
$(S^{\bullet}_{D,\lambda}, D^{\bullet}_{D,\lambda})$ forms a well-defined differential complex, i.e., $(D^{\bullet}_{D,\lambda})^2 = 0$.
\end{theorem}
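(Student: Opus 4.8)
The plan is to reduce the claim $(D^{\bullet}_{D,\lambda})^{2}=0$ to the two nilpotency facts already available: $d^{2}=0$ for the exterior differential acting on the form factor, and $(\delta^{\lambda}_{\mathfrak{g}})^{2}=0$ from the preceding Nilpotency Lemma acting on the symmetric-algebra factor. First I would note that each $D^{k}_{D,\lambda}$ really does land in $S^{k+1}_{D,\lambda}$ (it is a sum of two degree-raising maps, and it preserves $G$-equivariance and the direct-sum grading coming from strong transversality), so ``well-defined differential complex'' is indeed equivalent to $(D^{\bullet}_{D,\lambda})^{2}=0$. Then, by bilinearity it suffices to test on a decomposable element $\omega\otimes s$ with $\omega$ of pure form degree $k$ and $s\in S^{\bullet}(\mathfrak{g})$, and to expand
\[
D^{k+1}_{D,\lambda}\bigl(D^{k}_{D,\lambda}(\omega\otimes s)\bigr)=D^{k+1}_{D,\lambda}\bigl(d\omega\otimes s\bigr)+(-1)^{k}\,D^{k+1}_{D,\lambda}\bigl(\omega\otimes\delta^{\lambda}_{\mathfrak{g}}(s)\bigr).
\]
Expanding each summand once more produces four terms: $d^{2}\omega\otimes s$, killed by $d^{2}=0$; a term proportional to $\omega\otimes(\delta^{\lambda}_{\mathfrak{g}})^{2}(s)$, killed by the Nilpotency Lemma; and two cross terms, which carry the consecutive signs $(-1)^{k+1}$ and $(-1)^{k}$ on the same expression $d\omega\otimes\delta^{\lambda}_{\mathfrak{g}}(s)$ and hence cancel. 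This gives $(D^{\bullet}_{D,\lambda})^{2}=0$, so the Spencer cohomology $H^{\bullet}(S^{\bullet}_{D,\lambda},D^{\bullet}_{D,\lambda})$ is well-defined. It is worth recording that the sign $(-1)^{k}$ in the definition of $D^{k}_{D,\lambda}$ is present precisely so that $(-1)^{k+1}=-(-1)^{k}$ performs this cancellation.

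The one step that is not purely formal, and the place I expect the real work to hide, is the cancellation of the two cross terms: it silently uses that $d$ and $\delta^{\lambda}_{\mathfrak{g}}$ act on genuinely independent tensor factors and graded-commute. This is \emph{not} automatic, because $\lambda\colon P\to\mathfrak{g}^{*}$ is position-dependent, so $\delta^{\lambda}_{\mathfrak{g}}$ is really a bundle endomorphism of the symmetric-algebra factor whose coefficients vary over $P$, and a priori $d$ could differentiate that variation. I would handle this by observing that $\delta^{\lambda}_{\mathfrak{g}}$ is a zeroth-order (non-differentiating, fibrewise-linear) operator in the symmetric-algebra slot, and that its $p$-dependence through $\lambda(p)$ is controlled exactly by the modified Cartan equation $d\lambda+\ad^{*}_{\omega}\lambda=0$ together with the equivariance $R_{g}^{*}\lambda=\Ad^{*}_{g^{-1}}\lambda$. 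These are the same structural inputs that drive the Nilpotency Lemma, and they are what force $d$ and $\delta^{\lambda}_{\mathfrak{g}}$ to commute on the equivariant/basic subcomplex defining $S^{\bullet}_{D,\lambda}$. Equivalently, one first verifies that $D^{\bullet}_{D,\lambda}$ is well-defined on $S^{\bullet}_{D,\lambda}$ and respects all of its structure, and only then squares it.

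Thus the architecture is: (i) $D^{k}_{D,\lambda}$ maps $S^{k}_{D,\lambda}$ to $S^{k+1}_{D,\lambda}$ and is equivariant; (ii) on decomposables, $D\circ D$ splits into a $d^{2}$ term, a $(\delta^{\lambda}_{\mathfrak{g}})^{2}$ term, and canceling cross terms; (iii) conclude $(D^{\bullet}_{D,\lambda})^{2}=0$ everywhere by linearity. No property of the explicit second-order formula for $\delta^{\lambda}_{\mathfrak{g}}(v)$ beyond its nilpotency is needed, so the present theorem is genuinely a formal consequence of the Nilpotency Lemma and $d^{2}=0$; the analytic content — elliptic regularity, uniqueness of $\lambda$, and the Jacobi-type identities forced by the modified Cartan equation — lives upstream in the Nilpotency Lemma and in the Bidirectional Construction Theorem, not in this proof.
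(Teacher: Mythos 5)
Your proof is correct and follows essentially the same route as the paper: expand $D^{k+1}_{D,\lambda}\circ D^{k}_{D,\lambda}$ on a decomposable $\omega\otimes s$, kill the first term by $d^{2}=0$, the last by the Nilpotency Lemma, and cancel the two cross terms via $(-1)^{k+1}+(-1)^{k}=0$. Your extra paragraph about the position dependence of $\lambda$ and the commutation of $d$ with $\delta^{\lambda}_{\mathfrak{g}}$ is a reasonable caution, but within the paper's formal definition of $D^{k}_{D,\lambda}$ (where $d$ acts only on the form factor and $\delta^{\lambda}_{\mathfrak{g}}$ only on the symmetric factor) the cancellation is automatic, and the paper's proof treats it exactly that way.
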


\begin{proof}
For $\omega \otimes s \in S^k_{D,\lambda}$, compute:
\begin{align}
(D^{k+1}_{D,\lambda} \circ D^k_{D,\lambda})(\omega \otimes s) &= D^{k+1}_{D,\lambda}(d\omega \otimes s + (-1)^k \omega \otimes \delta^{\lambda}_{\mathfrak{g}}(s))\\
&= d^2\omega \otimes s + (-1)^{k+1} d\omega \otimes \delta^{\lambda}_{\mathfrak{g}}(s)\\
&\quad + (-1)^k d\omega \otimes \delta^{\lambda}_{\mathfrak{g}}(s) + (-1)^k(-1)^{k+1} \omega \otimes (\delta^{\lambda}_{\mathfrak{g}})^2(s)\\
&= 0 + (-1)^{k+1} d\omega \otimes \delta^{\lambda}_{\mathfrak{g}}(s) + (-1)^k d\omega \otimes \delta^{\lambda}_{\mathfrak{g}}(s) + 0\\
&= [(-1)^{k+1} + (-1)^k] d\omega \otimes \delta^{\lambda}_{\mathfrak{g}}(s)\\
&= 0
\end{align}

Here we used $d^2 = 0$, $(\delta^{\lambda}_{\mathfrak{g}})^2 = 0$, and the fact that $(-1)^{k+1} + (-1)^k = (-1)^k(-1 + 1) = 0$.
\end{proof}

\begin{definition}[Spencer Cohomology Groups]
The Spencer cohomology groups of compatible pair $(D,\lambda)$ are defined as:
$$H^k_{\text{Spencer}}(D,\lambda) = \frac{\ker(D^k_{D,\lambda})}{\text{im}(D^{k-1}_{D,\lambda})}$$
\end{definition}

According to \cite{zheng2025dynamical}, we have the following basic result:

\begin{theorem}[Spencer-de Rham Isomorphism \cite{zheng2025dynamical}]
Under global assumptions, there exists a natural isomorphism:
$$H^k_{\text{Spencer}}(D,\lambda) \cong H^k_{\dR}(M) \otimes H^0(\mathfrak{g}, \rho_{\lambda})$$
where $\rho_{\lambda}$ is the $\mathfrak{g}$-representation induced by $\lambda$.
\end{theorem}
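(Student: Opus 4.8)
The plan is to recognize the Spencer complex $(S^{\bullet}_{D,\lambda}, D^{\bullet}_{D,\lambda})$ as the total complex of a tensor product of two cochain complexes and then reduce to a purely Lie-algebraic computation. First I would make the bigrading explicit: writing $S^{k}_{D,\lambda} = \bigoplus_{p+q=k} \Omega^{p}\otimes \Sym^{q}(\mathfrak{g})$, the defining formula $D^{k}_{D,\lambda}(\omega\otimes s) = d\omega\otimes s + (-1)^{k}\omega\otimes\delta^{\lambda}_{\mathfrak{g}}(s)$ is literally the tensor-product differential $d\otimes 1 + (\pm 1)\otimes\delta^{\lambda}_{\mathfrak{g}}$, with no mixed cross-terms; nilpotency is exactly $d^{2}=0$ together with the already-established $(\delta^{\lambda}_{\mathfrak{g}})^{2}=0$. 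The one point needing care is the identification of the $\Omega^{\bullet}$ factor: using $G$-equivariance of $\lambda$ and the strong transversality splitting $T_{p}P = D_{p}\oplus V_{p}$, the horizontal part consists of basic forms on $P$, and the global assumptions on $M$ (connected, compact, orientable, parallelizable) and on $G$ ($\pi_{1}(G)=0$, compact semisimple) guarantee that these compute $H^{\bullet}_{\dR}(M)$ with trivial coefficients, i.e. the horizontal factor is quasi-isomorphic to the ordinary de Rham complex $(\Omega^{\bullet}(M),d)$.

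Second, since we work over $\mathbb{R}$ (so all terms are flat and all $\mathrm{Tor}$'s vanish) and each $\Sym^{q}(\mathfrak{g})$ and each $H^{p}_{\dR}(M)$ is finite-dimensional, I would apply the algebraic Künneth theorem for the tensor product of cochain complexes to get
\[
H^{k}_{\text{Spencer}}(D,\lambda)\;\cong\;\bigoplus_{p+q=k} H^{p}_{\dR}(M)\otimes H^{q}\!\big(\Sym^{\bullet}(\mathfrak{g}),\,\delta^{\lambda}_{\mathfrak{g}}\big).
\]
(Equivalently one runs the spectral sequence of the double complex filtered by symmetric degree; its collapse at $E_{2}$ gives the same answer.) The theorem is then reduced to the claim that the algebraic Spencer complex $(\Sym^{\bullet}(\mathfrak{g}),\delta^{\lambda}_{\mathfrak{g}})$ has cohomology concentrated in degree $0$, where it equals $H^{0}(\mathfrak{g},\rho_{\lambda})$.

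Third — and this is the technical heart — I would prove that vanishing statement by a Koszul/Poincaré-lemma-type argument. The operator $\delta^{\lambda}_{\mathfrak{g}}$ is $\mathfrak{g}$-equivariant for the $\rho_{\lambda}$-twisted action on $\Sym^{\bullet}(\mathfrak{g})$ (this is where the modified Cartan equation $d\lambda + \ad_{\omega}^{*}\lambda = 0$ enters: it both makes $\delta^{\lambda}_{\mathfrak{g}}$ a genuine differential and pins down $\rho_{\lambda}$), so by complete reducibility — here semisimplicity of $\mathfrak{g}$ and $\mathfrak{z}(\mathfrak{g})=0$ are essential — the complex splits into $\mathfrak{g}$-isotypic subcomplexes. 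On each subcomplex I would construct an Euler/degree-type operator $h$ with $\delta^{\lambda}_{\mathfrak{g}}h + h\delta^{\lambda}_{\mathfrak{g}} = c\cdot\mathrm{id}$, where $c$ is a scalar built from the Casimir/weight data of the isotypic block and the nested-commutator identities already recorded in the Equivalent Symbolic Expression proposition; $c\neq 0$ off the trivial block forces acyclicity there, while the computation on the trivial block isolates precisely $H^{0}(\mathfrak{g},\rho_{\lambda})$ in degree $0$. I expect verifying nondegeneracy of $c$ on every isotypic component, and managing the symmetric-product bookkeeping, to be the main obstacle; if a clean global homotopy proves elusive, the fallback is to exhibit an explicit quasi-isomorphism between $(\Sym^{\bullet}(\mathfrak{g}),\delta^{\lambda}_{\mathfrak{g}})$ and a standard Koszul-type resolution.

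Finally, I would check that the isomorphism is natural: the Künneth isomorphism is natural in each tensor factor, the identification of basic forms with $\Omega^{\bullet}(M)$ is functorial for bundle morphisms covering maps of $M$, and the construction of $\delta^{\lambda}_{\mathfrak{g}}$ and hence of $\rho_{\lambda}$ depends functorially on $\lambda$; composing these naturalities yields naturality of the stated isomorphism in the compatible pair $(D,\lambda)$.
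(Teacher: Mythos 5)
You should know at the outset that the paper does not prove this statement at all: the Spencer--de Rham isomorphism is imported verbatim from the cited reference \cite{zheng2025dynamical} and used as a black box, so there is no internal proof to compare your argument against. What follows is therefore an assessment of your proposal on its own terms.

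Your Künneth skeleton is the natural reading of the formula for $D^k_{D,\lambda}$, and you correctly isolate where the content lies: everything reduces to showing that the algebraic complex $(\Sym^{\bullet}(\mathfrak{g}),\delta^{\lambda}_{\mathfrak{g}})$ has cohomology concentrated in degree $0$, equal to $H^0(\mathfrak{g},\rho_{\lambda})$. But there are two genuine gaps. First, that acyclicity statement is precisely the nontrivial part of the theorem, and you do not establish it: the Casimir-type homotopy $h$ with $\delta^{\lambda}_{\mathfrak{g}}h+h\delta^{\lambda}_{\mathfrak{g}}=c\cdot\mathrm{id}$ is postulated rather than constructed, the nonvanishing of $c$ on nontrivial isotypic blocks is exactly the point at issue, and since neither you nor the paper says what the representation $\rho_{\lambda}$ actually is, the identification of the degree-zero cohomology with $H^0(\mathfrak{g},\rho_{\lambda})$ cannot even be stated precisely, let alone verified. (Note also that $\delta^{\lambda}_{\mathfrak{g}}$ as defined sends $v\in\mathfrak{g}$ to a symmetric bilinear form, i.e.\ an element of $\Sym^2(\mathfrak{g}^*)$ rather than $\Sym^2(\mathfrak{g})$, so an identification, say via the Killing form, must be made explicit before any homotopy calculus can begin.) Second, and more structurally, $\lambda$ is a map $P\to\mathfrak{g}^*$, not a fixed covector, so $\delta^{\lambda}_{\mathfrak{g}}$ varies with the base point. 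The Spencer complex is therefore not a tensor product of two fixed cochain complexes over $\mathbb{R}$, and the algebraic Künneth theorem (equivalently, the asserted collapse of your spectral sequence at $E_2$) does not literally apply: the exterior derivative can differentiate the $\lambda$-dependence, producing exactly the cross terms you claim are absent. To repair this you would need either a fiberwise vanishing statement together with an argument that the fiberwise cohomologies assemble into a flat (indeed trivial, presumably via parallelizability of $M$ and the modified Cartan equation) bundle over $M$, or a genuine double-complex argument over $C^{\infty}(M)$; neither is supplied. Until both points are filled in, your proposal is a plausible strategy, not a proof.
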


\section{Results}\label{sec3}

\subsection{Complete Analysis of Sign Mirror Symmetry}

We now develop the first non-trivial mirror transformation: the sign mirror, with detailed mathematical verification.

\subsubsection{Definition and Basic Properties of Sign Transformation}

\begin{definition}[Sign Mirror Transformation]
The sign mirror transformation $\mathcal{S}$ is defined as:
$$\mathcal{S}(D,\lambda) = (D, -\lambda)$$
\end{definition}

\subsubsection{Verification of Compatible Pair Properties Item by Item}

\begin{theorem}[Sign Mirror Preserves Compatible Pair Properties]
If $(D,\lambda)$ is a compatible pair, then $(D, -\lambda)$ is also a compatible pair.
\end{theorem}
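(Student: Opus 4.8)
The plan is to verify, clause by clause, that the pair $(D,-\lambda)$ satisfies every condition in the definition of a compatible pair, exploiting that each of the defining equations for the dual constraint function is \emph{linear} in $\lambda$ and that the constraint distribution $D$ itself is left untouched by $\mathcal{S}$. Concretely, a compatible pair requires: (i) $D$ is a constant-rank, $G$-invariant, strongly transversal $C^1$ distribution; (ii) the map $P\to\g^*$ in the second slot is $C^2$, satisfies the modified Cartan equation, is $G$-equivariant, and is nowhere zero; (iii) the two slots satisfy the compatibility identity $D_p=\{v:\langle(\text{slot }2)(p),\omega(v)\rangle=0\}$. Since $\mathcal{S}$ changes only the second slot, (i) is immediate: $D$ is the same distribution, so it retains constant rank, $G$-invariance $R_g^*D=D$, and the strong transversality decomposition $T_pP=D_p\oplus V_p$ verbatim.

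For (ii), I would check the three analytic conditions on $-\lambda$ in turn. First, $-\lambda$ is $C^2$ because $\lambda$ is. Second, by linearity of the exterior derivative and of $\ad_\omega^*$ (the latter being $\mathbb{R}$-linear in its $\g^*$-argument, as $(\ad_\omega^*\xi)(X)=\ad^*_{\omega(X)}\xi$),
\[
d(-\lambda)+\ad_\omega^*(-\lambda) = -\bigl(d\lambda+\ad_\omega^*\lambda\bigr) = 0,
\]
so the modified Cartan equation holds. Third, $G$-equivariance follows from linearity of the coadjoint action: $R_g^*(-\lambda)=-R_g^*\lambda=-\Ad_{g^{-1}}^*\lambda=\Ad_{g^{-1}}^*(-\lambda)$. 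Finally, non-degeneracy is preserved because $-\lambda(p)=0$ iff $\lambda(p)=0$, and the latter never happens.

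For (iii), the compatibility identity is preserved because the defining linear condition is insensitive to the overall sign:
\[
\{v\in T_pP:\langle-\lambda(p),\omega(v)\rangle=0\}
=\{v\in T_pP:-\langle\lambda(p),\omega(v)\rangle=0\}
=\{v\in T_pP:\langle\lambda(p),\omega(v)\rangle=0\}=D_p .
\]
Alternatively, once (ii) is established one may simply invoke the Forward Construction half of the Bidirectional Construction Theorem applied to $-\lambda$: it produces a compatible pair $(D',-\lambda)$ with $D'_p=\{v:\langle-\lambda(p),\omega(v)\rangle=0\}$, and the displayed computation identifies $D'=D$. I do not expect a genuine obstacle here — the entire argument is driven by linearity — so the only thing to be careful about is completeness: ensuring that \emph{every} clause of the compatible-pair definition is accounted for rather than silently assumed. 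It is worth remarking in passing that $\mathcal{S}$ is the special case $\phi=\mathrm{id}$, sign flip, of the general automorphism-induced mirror $\lambda\mapsto(d\phi)^*(\lambda)$ treated later, and the same linearity argument is the prototype for that more general statement.
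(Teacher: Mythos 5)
Your proposal is correct and follows essentially the same route as the paper: clause-by-clause verification that the sign flip preserves strong transversality (trivially, since $D$ is unchanged), the modified Cartan equation and $G$-equivariance (by linearity of $d$, $\ad_\omega^*$, and $\Ad_{g^{-1}}^*$), the compatibility identity (sign-insensitivity of the vanishing condition), non-degeneracy, and regularity. The only quibble is with your parenthetical aside, not the proof itself: $\lambda\mapsto-\lambda$ does not literally arise from a Lie group automorphism, since $X\mapsto-X$ is not a Lie algebra automorphism for nonabelian $\mathfrak{g}$, so the sign mirror is a prototype for, rather than a special case of, the automorphism-induced mirrors.
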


\begin{proof}
We verify each defining condition one by one:

\textbf{Step 1: Preservation of Strong Transversality (ST)}
Since the constraint distribution $D$ remains unchanged under sign transformation, the strong transversality condition is automatically satisfied:
$$D_p + V_p = T_pP \quad \text{and} \quad D_p \cap V_p = \{0\} \quad \text{for all } p \in P$$

This is because the strong transversality condition depends only on the distribution $D$ and the vertical bundle $V$, both of which are unchanged.

\textbf{Step 2: Verification of Modified Cartan Equation (MC)}
We need to verify that $-\lambda$ satisfies the modified Cartan equation. Since $\ad_{\omega}^*$ is a linear operator:
\begin{align}
d(-\lambda) + \ad_{\omega}^*(-\lambda) &= -d\lambda - \ad_{\omega}^* \lambda\\
&= -(d\lambda + \ad_{\omega}^* \lambda)\\
&= -0 = 0
\end{align}
Here we used the assumption that $\lambda$ satisfies the modified Cartan equation $d\lambda + \ad_{\omega}^* \lambda = 0$.

\textbf{Step 3: Verification of Compatibility Condition (CP)}
We need to prove:
$$D_p = \{v \in T_pP : \langle(-\lambda)(p), \omega(v)\rangle = 0\}$$

This is obvious because:
$$\langle(-\lambda)(p), \omega(v)\rangle = -\langle\lambda(p), \omega(v)\rangle$$

Therefore:
\begin{align}
\{v \in T_pP : \langle(-\lambda)(p), \omega(v)\rangle = 0\} &= \{v \in T_pP : -\langle\lambda(p), \omega(v)\rangle = 0\}\\
&= \{v \in T_pP : \langle\lambda(p), \omega(v)\rangle = 0\}\\
&= D_p
\end{align}

\textbf{Step 4: Verification of G-Equivariance (EQ)}
We need to verify:
$$R_g^*(D) = D \quad \text{and} \quad R_g^*(-\lambda) = \Ad_{g^{-1}}^*(-\lambda)$$

The first condition is automatically satisfied since $D$ is invariant under the original assumption. For the second condition:
\begin{align}
R_g^*(-\lambda) &= -R_g^*(\lambda)\\
&= -\Ad_{g^{-1}}^* \lambda \quad \text{(by original G-equivariance)}\\
&= \Ad_{g^{-1}}^*(-\lambda)
\end{align}
Here we crucially used the fact that $\Ad_{g^{-1}}^*$ is a linear operator, so it commutes with the negative sign.

\textbf{Step 5: Verification of Non-degeneracy}
If $\lambda(p) \neq 0$, then $(-\lambda)(p) = -\lambda(p) \neq 0$ as well, so non-degeneracy is preserved.

\textbf{Step 6: Verification of Regularity}
Since $-\lambda$ differs from $\lambda$ only by a scalar multiplication, if $\lambda \in C^2(P,\mathfrak{g}^*)$, then $-\lambda \in C^2(P,\mathfrak{g}^*)$ as well.
\end{proof}

\subsubsection{Involution Properties and Spencer Operator Behavior}

\begin{corollary}[Involution Property of Sign Mirror]
The sign mirror is an involution transformation: $\mathcal{S} \circ \mathcal{S} = \text{id}$.
\end{corollary}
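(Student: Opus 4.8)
The plan is to evaluate the composite $\mathcal{S}\circ\mathcal{S}$ directly on an arbitrary compatible pair, using the preceding theorem only to guarantee that every intermediate object is again a compatible pair, so that the composition genuinely makes sense as a self-map of the space of compatible pairs. First I would record that, by the theorem asserting that the sign mirror preserves compatible-pair properties, $\mathcal{S}(D,\lambda) = (D,-\lambda)$ is a compatible pair whenever $(D,\lambda)$ is; hence $\mathcal{S}$ is a bona fide involution candidate on the set of compatible pairs and $\mathcal{S}\circ\mathcal{S}$ is defined. Then the computation is immediate:
\[
(\mathcal{S}\circ\mathcal{S})(D,\lambda) = \mathcal{S}(D,-\lambda) = (D,-(-\lambda)) = (D,\lambda),
\]
the only fact used being that scalar multiplication by $-1$ on $\mathfrak{g}^*$ is an involution. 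Since this holds for every compatible pair, $\mathcal{S}\circ\mathcal{S} = \mathrm{id}$.

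Second, I would add a short remark recording the operator-level shadow of this identity, which is what makes the corollary useful downstream. The Spencer operator on generators, $(\delta^{\lambda}_{\mathfrak{g}}(v))(w_1,w_2) = \frac{1}{2}(\langle\lambda,[w_1,[w_2,v]]\rangle + \langle\lambda,[w_2,[w_1,v]]\rangle)$, is $\mathbb{R}$-linear in $\lambda$, so $\delta^{-\lambda}_{\mathfrak{g}} = -\delta^{\lambda}_{\mathfrak{g}}$ on $S^1(\mathfrak{g})$; an induction on tensor degree using the graded Leibniz rule propagates this to all of $S(\mathfrak{g})$, giving $\delta^{-\lambda}_{\mathfrak{g}} = -\delta^{\lambda}_{\mathfrak{g}}$ identically, and consequently $\delta^{-(-\lambda)}_{\mathfrak{g}} = \delta^{\lambda}_{\mathfrak{g}}$. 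This says that at the level of the constraint-coupled Spencer data, applying the sign flip twice restores the original complex $(S^{\bullet}_{D,\lambda}, D^{\bullet}_{D,\lambda})$ on the nose, which is the natural compatibility one expects between the involution $\mathcal{S}^2 = \mathrm{id}$ and the Spencer cohomology functor $(D,\lambda)\mapsto \Hspencer(D,\lambda)$.

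There is no substantive obstacle here: the corollary is a direct consequence of the additive-inverse structure on $\mathfrak{g}^*$ together with the already-established invariance of the compatible-pair conditions under $\mathcal{S}$. The only points requiring care are bookkeeping — being explicit that $\mathcal{S}$ is regarded as a self-map of the space of compatible pairs (so that the preceding theorem is genuinely what justifies well-definedness of the composition) and, for the remark, tracking the sign through $\delta^{-\lambda}_{\mathfrak{g}} = -\delta^{\lambda}_{\mathfrak{g}}$ and the definition of $D^{\bullet}_{D,\lambda}$ so that the double flip visibly cancels.
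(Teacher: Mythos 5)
Your proof is correct and uses essentially the same one-line computation as the paper, $(\mathcal{S}\circ\mathcal{S})(D,\lambda) = \mathcal{S}(D,-\lambda) = (D,-(-\lambda)) = (D,\lambda)$; the extra care about well-definedness via the preceding theorem is a reasonable (if implicit in the paper) addition. The remark on $\delta^{-\lambda}_{\mathfrak{g}} = -\delta^{\lambda}_{\mathfrak{g}}$ is not needed for the corollary itself and duplicates the paper's subsequent proposition.
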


\begin{proof}
For any compatible pair $(D,\lambda)$:
$$(\mathcal{S} \circ \mathcal{S})(D,\lambda) = \mathcal{S}(D,-\lambda) = (D,-(-\lambda)) = (D,\lambda)$$
\end{proof}

\begin{proposition}[Sign Transformation of Spencer Operator]
Under sign transformation, the constraint-induced Spencer operator transforms as:
$$\delta^{-\lambda}_{\mathfrak{g}} = -\delta^{\lambda}_{\mathfrak{g}}$$
\end{proposition}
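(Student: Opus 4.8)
The plan is to exploit the structural fact that, by its constructive definition, the constraint-induced Spencer operator $\delta^{\lambda}_{\mathfrak{g}}$ is completely determined by two pieces of data: its action on the generating space $S^1(\mathfrak{g}) = \mathfrak{g}$, and the graded Leibniz rule. First I would settle the generator case $k=1$. The defining formula
$$(\delta^{\lambda}_{\mathfrak{g}}(v))(w_1,w_2) = \tfrac{1}{2}\left(\langle\lambda,[w_1,[w_2,v]]\rangle + \langle\lambda,[w_2,[w_1,v]]\rangle\right)$$
is manifestly $\mathbb{R}$-linear in $\lambda$ through the pairing $\langle\cdot,\cdot\rangle \colon \mathfrak{g}^*\times\mathfrak{g}\to\mathbb{R}$, so substituting $-\lambda$ for $\lambda$ pulls out an overall minus sign, giving $\delta^{-\lambda}_{\mathfrak{g}}(v) = -\,\delta^{\lambda}_{\mathfrak{g}}(v)$ for every $v\in\mathfrak{g}$. (The same is visible from the equivalent symbolic expression of the preceding proposition, which is also linear in $\lambda$, so the two descriptions transform consistently.)

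Next I would record the elementary observation that $-\delta^{\lambda}_{\mathfrak{g}}$ is again a degree $+1$ graded derivation of $S(\mathfrak{g})$. Multiplying both sides of the Leibniz identity for $\delta^{\lambda}_{\mathfrak{g}}$ by $-1$ and distributing yields
$$(-\delta^{\lambda}_{\mathfrak{g}})(s_1 \odot s_2) = (-\delta^{\lambda}_{\mathfrak{g}})(s_1)\odot s_2 + (-1)^{p}\, s_1 \odot (-\delta^{\lambda}_{\mathfrak{g}})(s_2), \qquad s_1\in S^p(\mathfrak{g}),$$
which is exactly the graded Leibniz rule defining a degree $+1$ graded derivation; note the sign $(-1)^p$ is untouched because it comes from the grading, not from $\lambda$. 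Since $\delta^{-\lambda}_{\mathfrak{g}}$ is also a degree $+1$ graded derivation by construction, and since two such derivations that agree on the generating set $\mathfrak{g}$ must agree on all of $S(\mathfrak{g})$ — any element of $S^k(\mathfrak{g})$ being a sum of products $v_1\odot\cdots\odot v_k$ of generators, along which the Leibniz rule propagates the agreement from $S^1$ upward — the $k=1$ identity upgrades at once to $\delta^{-\lambda}_{\mathfrak{g}} = -\,\delta^{\lambda}_{\mathfrak{g}}$ on all of $S(\mathfrak{g})$.

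If one prefers a self-contained argument not invoking the abstract uniqueness of graded derivations, the identical conclusion follows by induction on tensor degree: assuming $\delta^{-\lambda}_{\mathfrak{g}} = -\,\delta^{\lambda}_{\mathfrak{g}}$ on $S^{j}(\mathfrak{g})$ for all $j\le k$, write a general element of $S^{k+1}(\mathfrak{g})$ as $s\odot v$ with $s\in S^{k}(\mathfrak{g})$ and $v\in\mathfrak{g}$, apply the Leibniz rule for $\delta^{-\lambda}_{\mathfrak{g}}$, substitute the inductive hypothesis together with the base case, and factor out the common $-1$. I do not expect a genuine obstacle: the only point requiring care is the sign bookkeeping in the graded Leibniz rule — specifically confirming that the factor $(-1)^p$ is insensitive to the rescaling $\lambda\mapsto-\lambda$ — which the computation above makes explicit.
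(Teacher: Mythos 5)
Your proposal is correct and follows essentially the same route as the paper: verify the identity on generators by the linearity of the pairing in $\lambda$, then propagate to higher symmetric degrees via the graded Leibniz rule (the paper compresses this second step into one sentence, which you spell out via the derivation-uniqueness/induction argument).
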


\begin{proof}
For any generator $v \in \mathfrak{g}$ and test vectors $w_1, w_2 \in \mathfrak{g}$:
\begin{align}
(\delta^{-\lambda}_{\mathfrak{g}}(v))(w_1,w_2) &= \frac{1}{2}(\langle(-\lambda),[w_1,[w_2,v]]\rangle + \langle(-\lambda),[w_2,[w_1,v]]\rangle)\\
&= \frac{1}{2}(-\langle\lambda,[w_1,[w_2,v]]\rangle - \langle\lambda,[w_2,[w_1,v]]\rangle)\\
&= -\frac{1}{2}(\langle\lambda,[w_1,[w_2,v]]\rangle + \langle\lambda,[w_2,[w_1,v]]\rangle)\\
&= -(\delta^{\lambda}_{\mathfrak{g}}(v))(w_1,w_2)
\end{align}

For higher-order tensors, the result follows from the Leibniz rule and linearity.
\end{proof}

\begin{corollary}[Preservation of Differential Complex Structure under Sign Mirror]
The Spencer complex $(S^{\bullet}_{D,-\lambda}, D^{\bullet}_{D,-\lambda})$ is a well-defined differential complex, and the sign mirror induces natural isomorphisms between cohomology groups.
\end{corollary}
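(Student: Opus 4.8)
The plan is to treat the two assertions separately, using the results already established for the mirrored pair $(D,-\lambda)$.

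\textbf{Well-definedness.} By the theorem that the sign mirror preserves compatible pair properties, $(D,-\lambda)$ is itself a compatible pair, so the theorem on well-definedness of the Spencer complex applies verbatim with $\lambda$ replaced by $-\lambda$, yielding $(D^{\bullet}_{D,-\lambda})^{2}=0$. I would also record the more self-contained route: the cross-term cancellation in that theorem is purely formal (it uses only $(-1)^{k+1}+(-1)^{k}=0$), so the single nontrivial input is $(\delta^{-\lambda}_{\mathfrak{g}})^{2}=0$; and by the proposition on the sign transformation of the Spencer operator, $\delta^{-\lambda}_{\mathfrak{g}}=-\delta^{\lambda}_{\mathfrak{g}}$, whence $(\delta^{-\lambda}_{\mathfrak{g}})^{2}=(\delta^{\lambda}_{\mathfrak{g}})^{2}=0$ by the nilpotency lemma.

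\textbf{Cohomology isomorphism.} The underlying graded cochain spaces of $(S^{\bullet}_{D,\lambda},D^{\bullet}_{D,\lambda})$ and $(S^{\bullet}_{D,-\lambda},D^{\bullet}_{D,-\lambda})$ are canonically identified: the sign flip changes neither $D$ nor the symmetric algebra $S(\mathfrak{g})$, only the differential. I would then introduce the degree-zero map $\Phi\colon S^{\bullet}_{D,\lambda}\to S^{\bullet}_{D,-\lambda}$ acting on a homogeneous element $\omega\otimes s$ with $s\in S^{m}(\mathfrak{g})$ by $\Phi(\omega\otimes s)=(-1)^{m}\,\omega\otimes s$; being $\pm 1$ on each summand $\Omega^{a}\otimes S^{m}$, it is a linear automorphism. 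The key check is that $\Phi$ intertwines the differentials:
$$\Phi\big(D^{k}_{D,\lambda}(\omega\otimes s)\big)=(-1)^{m}\,d\omega\otimes s+(-1)^{k}(-1)^{m+1}\,\omega\otimes\delta^{\lambda}_{\mathfrak{g}}(s),$$
$$D^{k}_{D,-\lambda}\big(\Phi(\omega\otimes s)\big)=(-1)^{m}\,d\omega\otimes s+(-1)^{m}(-1)^{k}\,\omega\otimes\delta^{-\lambda}_{\mathfrak{g}}(s)=(-1)^{m}\,d\omega\otimes s-(-1)^{m}(-1)^{k}\,\omega\otimes\delta^{\lambda}_{\mathfrak{g}}(s),$$
and the two right-hand sides coincide because $(-1)^{m+1}=-(-1)^{m}$ — which is exactly what pins down the alternating sign $(-1)^{m}$ (note that $m$ is the symmetric tensor degree, not the total degree $k$). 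Hence $\Phi$ is an isomorphism of cochain complexes and therefore induces isomorphisms $H^{k}_{\text{Spencer}}(D,\lambda)\xrightarrow{\ \sim\ }H^{k}_{\text{Spencer}}(D,-\lambda)$ for every $k$.

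\textbf{Naturality and the main difficulty.} Since $\Phi$ is built solely from the internal grading of $S(\mathfrak{g})$ and uses no metric, no splitting, and no choice of basis, it commutes with all the relevant structure maps — in particular with pullbacks along bundle morphisms and with the comparison furnished by the Spencer–de Rham isomorphism theorem, under which $\Phi$ corresponds to $\mathrm{id}$ on $H^{k}_{\dR}(M)$ tensored with the tautological identification $H^{0}(\mathfrak{g},\rho_{\lambda})\cong H^{0}(\mathfrak{g},\rho_{-\lambda})$ (the two invariant subspaces coincide since $\rho_{-\lambda}$ differs from $\rho_{\lambda}$ only by an overall sign on the acting operators). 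I expect no serious obstacle: the one point requiring care is sign bookkeeping — making sure $\Phi$ genuinely lands in the $-\lambda$ complex rather than merely commuting with the differentials up to a global sign — and this is settled once the exponent is correctly identified as the symmetric tensor degree.
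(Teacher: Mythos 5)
Your argument is correct and follows the same strategy as the paper: well-definedness of $(S^{\bullet}_{D,-\lambda}, D^{\bullet}_{D,-\lambda})$ is obtained from the compatible-pair preservation theorem (equivalently from $\delta^{-\lambda}_{\mathfrak{g}}=-\delta^{\lambda}_{\mathfrak{g}}$ together with nilpotency), and the cohomology isomorphism is given by an explicit sign-twist chain isomorphism. The one substantive difference is in the sign-twist itself: the paper simply asserts that $\Phi^k(\omega\otimes s)=(-1)^k\,\omega\otimes s$ (with $k$ the degree of the complex level) ``provides the isomorphism'' and never checks the intertwining identity, whereas you carry out the check and find that the exponent must be the symmetric tensor degree $m$ of $s$. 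Your bookkeeping is the one that actually closes: with a sign depending only on $m$, the term $d\omega\otimes s$ (which does not change $m$) picks up the same sign on both sides, while the term $\omega\otimes\delta^{\lambda}_{\mathfrak{g}}(s)$ (which raises $m$ by one) picks up the extra $-1$ needed to absorb $\delta^{-\lambda}_{\mathfrak{g}}=-\delta^{\lambda}_{\mathfrak{g}}$; if instead one applies a uniform $(-1)^k$ on all of $S^k$ and $(-1)^{k+1}$ on $S^{k+1}$, the $d\omega\otimes s$ terms disagree by a sign, so the paper's formula is only literally correct on elements where the form and tensor degrees coincide and is otherwise a (repairable) sign slip that your version fixes. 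Your closing remarks on naturality and compatibility with the Spencer--de Rham isomorphism go beyond what the paper proves but are consistent with it and introduce no gap.
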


\begin{proof}
Since $(D,-\lambda)$ is a compatible pair by the theorem above, the Spencer complex is well-defined. The differential in the mirrored complex is:
$$D^k_{D,-\lambda}(\omega \otimes s) = d\omega \otimes s + (-1)^k \omega \otimes \delta^{-\lambda}_{\mathfrak{g}}(s)$$

Using $\delta^{-\lambda}_{\mathfrak{g}} = -\delta^{\lambda}_{\mathfrak{g}}$:
$$D^k_{D,-\lambda}(\omega \otimes s) = d\omega \otimes s - (-1)^k \omega \otimes \delta^{\lambda}_{\mathfrak{g}}(s)$$

The map $\Phi^k: S^k_{D,\lambda} \to S^k_{D,-\lambda}$ defined by $\Phi^k(\omega \otimes s) = (-1)^k \omega \otimes s$ provides the isomorphism.
\end{proof}

\subsection{Systematic Development of Automorphism-Induced Mirror Theory}

We now generalize the sign mirror to a systematic theory based on Lie group automorphisms.

\subsubsection{Geometric Action of Lie Group Automorphisms}

\begin{definition}[Induced Actions of Lie Group Automorphisms]
Let $\phi: G \to G$ be a Lie group automorphism. This induces the following structures:

\textbf{Lie Algebra Automorphism}: $d\phi: \mathfrak{g} \to \mathfrak{g}$, the differential at the identity.

\textbf{Dual Map}: $(d\phi)^*: \mathfrak{g}^* \to \mathfrak{g}^*$, defined by $\langle(d\phi)^*(\xi), X\rangle = \langle\xi, d\phi(X)\rangle$.

\textbf{Principal Bundle Automorphism}: $\Phi: P \to P$, as the lift of $\phi$ to the principal bundle, satisfying $\Phi(p \cdot g) = \Phi(p) \cdot \phi(g)$.
\end{definition}

\begin{remark}[Existence of Principal Bundle Automorphisms]
Since we assume $G$ is simply connected and $P$ admits a $G$-invariant metric, the principal bundle lift $\Phi$ of the Lie group automorphism $\phi$ always exists and is unique. This lift preserves the fiber structure and is compatible with the group action.
\end{remark}

\begin{definition}[Automorphism Mirror Transformation]
For a Lie group automorphism $\phi$, the automorphism mirror transformation $\mathcal{M}_{\phi}$ is defined as:
$$\mathcal{M}_{\phi}(D,\lambda) = (\Phi_*(D), (d\phi)^*(\lambda \circ \Phi^{-1}))$$
where $\Phi_*(D)$ denotes the pushforward of distribution $D$ under $\Phi$.
\end{definition}

\subsubsection{Detailed Proof of Automorphism Mirror Preserving Compatible Pair Properties}

\begin{theorem}[Compatible Pair Preservation by Automorphism Mirrors]
Let $(D,\lambda)$ be a compatible pair and $\phi: G \to G$ be a Lie group automorphism. Then $(\Phi_*(D), (d\phi)^*(\lambda \circ \Phi^{-1}))$ is a compatible pair.
\end{theorem}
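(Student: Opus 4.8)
The plan is to verify, one condition at a time, that the transformed data $\tilde D := \Phi_*(D)$ and $\tilde\lambda := (d\phi)^*(\lambda\circ\Phi^{-1})$ satisfy the six defining requirements of a compatible pair (strong transversality, modified Cartan equation, the compatibility relation, $G$-invariance of the distribution, $G$-equivariance of the dual function, non-degeneracy, regularity), in close analogy with the proof of the sign-mirror theorem. The one genuinely new geometric ingredient is the transformation law of the connection under the bundle automorphism $\Phi$, namely $\Phi^*\omega = d\phi\circ\omega$, equivalently $(\Phi^{-1})^*\omega = (d\phi)^{-1}\circ\omega$. I would establish this first: on a fundamental vector field $A^{\sharp}$ one has $\Phi_*(A^{\sharp}) = (d\phi(A))^{\sharp}$, since $\Phi(p\exp tA) = \Phi(p)\exp(t\,d\phi(A))$, which pins down $\Phi^*\omega$ on the vertical bundle; together with the fact that $\Phi$ is a bundle map compatible with the $G$-action and the uniqueness clause in the Remark on the existence of the lift, this determines $\Phi^*\omega$ everywhere and yields the displayed identity. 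After this lemma, everything else is a substitution exercise.

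Granting the transformation law, the "soft" conditions fall out immediately. Strong transversality: $\Phi$ is a bundle automorphism, so $(d\Phi)_p$ maps $V_p$ isomorphically onto $V_{\Phi(p)}$, and a linear isomorphism carries a direct-sum decomposition to a direct-sum decomposition, so $\tilde D_p\oplus V_p = T_pP$ and $\tilde D_p\cap V_p=\{0\}$ follow from the corresponding statements for $D$. Non-degeneracy: $\Phi^{-1}$ is a bijection of $P$ and $(d\phi)^*$ is a linear isomorphism of $\mathfrak g^*$, hence $\tilde\lambda(p)=(d\phi)^*\big(\lambda(\Phi^{-1}(p))\big)\neq 0$. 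Regularity: $\Phi$ and $d\phi$ are smooth, so $\tilde\lambda\in C^2(P,\mathfrak g^*)$ and $\tilde D$ is a $C^1$ constant-rank distribution. Compatibility relation: for $v\in T_pP$ write $q=\Phi^{-1}(p)$ and $v=(d\Phi)_q(u)$; then $\omega_p(v)=d\phi(\omega_q(u))$ by the transformation law, and $\langle\tilde\lambda(p),\omega_p(v)\rangle$ reduces — once the two copies of $d\phi$ are paired off, which is exactly where the convention discussed below must be fixed consistently — to $\langle\lambda(q),\omega_q(u)\rangle$, so the vanishing locus of $\langle\tilde\lambda(p),\cdot\rangle$ on $T_pP$ is precisely $\tilde D_p$.

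For $G$-equivariance of $\tilde\lambda$ and $G$-invariance of $\tilde D$, the essential identities are $\Phi\circ R_g = R_{\phi(g)}\circ\Phi$ (the defining property of the lift) together with the fact that an automorphism intertwines the adjoint action, $d\phi\circ\Ad_g = \Ad_{\phi(g)}\circ d\phi$, hence dually $\Ad^*_g\circ(d\phi^{-1})^* = (d\phi^{-1})^*\circ\Ad^*_{\phi^{-1}(g)}$. Pulling back $R_h^*\lambda = \Ad^*_{h^{-1}}\lambda$ along $\Phi^{-1}$ with $h=\phi^{-1}(g)$ and applying the appropriate dual of $d\phi$ then gives $R_g^*\tilde\lambda = \Ad^*_{g^{-1}}\tilde\lambda$; for the distribution, $(dR_g)(\tilde D_p) = d(R_g\circ\Phi)(D_{\Phi^{-1}p}) = d(\Phi\circ R_{\phi^{-1}(g)})(D_{\Phi^{-1}p}) = d\Phi(D_{\Phi^{-1}(pg)}) = \tilde D_{pg}$, using $R_{\phi^{-1}(g)}$-invariance of $D$ and $\Phi^{-1}(p)\phi^{-1}(g)=\Phi^{-1}(pg)$.

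The main obstacle is the modified Cartan equation for $\tilde\lambda$. Since the dual of $d\phi$ is a constant linear map it commutes with $d$, and $d$ commutes with pullback, so $d\tilde\lambda = (d\phi)^*\big((\Phi^{-1})^*(d\lambda)\big) = -(d\phi)^*\big((\Phi^{-1})^*(\ad^*_{\omega}\lambda)\big)$. It remains to show this equals $-\ad^*_{\omega}\tilde\lambda$. Evaluating on $X$: $(\Phi^{-1})^*\omega$ contributes $(d\phi)^{-1}(\omega(X))$, so $(\Phi^{-1})^*(\ad^*_\omega\lambda)$ produces a term $\ad^*_{(d\phi)^{-1}(\omega(X))}(\lambda\circ\Phi^{-1})$; on the other side, writing $\omega(X)=d\phi\big((d\phi)^{-1}\omega(X)\big)$ and invoking the intertwining of the coadjoint action by $d\phi$ — which follows from $d\phi$ being a Lie-algebra automorphism, so $(d\phi)^{-1}[Y,Z]=[(d\phi)^{-1}Y,(d\phi)^{-1}Z]$ — collapses the two expressions onto each other and gives $d\tilde\lambda+\ad^*_\omega\tilde\lambda=0$ pointwise. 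The only delicate point throughout is bookkeeping the direction of $d\phi$ versus $(d\phi)^{-1}$ — equivalently, fixing once and for all a consistent convention for the lift $\Phi$ (whether $\Phi(pg)=\Phi(p)\phi(g)$ or $\Phi(p)\phi^{-1}(g)$) and for which dual of $d\phi$ enters $\tilde\lambda$ — so that the cancellations in both the compatibility relation and the Cartan equation are clean; with such a convention fixed, every identity above is a one-line check and the theorem follows exactly as in the sign-mirror case with $\mathcal S$ replaced by $\mathcal M_\phi$. As a consistency check, $\phi=\mathrm{id}$ (so $\Phi=\mathrm{id}$) returns the identity transformation.
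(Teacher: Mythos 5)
Your overall architecture (condition-by-condition verification, with the equivariance computations driven by $\Phi\circ R_g=R_{\phi(g)}\circ\Phi$ and $d\phi\circ\Ad_g=\Ad_{\phi(g)}\circ d\phi$) matches the paper's, and the ``soft'' steps — strong transversality via $d\Phi$ mapping $V_q$ onto $V_{\Phi(q)}$, non-degeneracy, regularity, and $G$-equivariance — are fine. The genuine gap is precisely your ``one genuinely new geometric ingredient,'' the transformation law $\Phi^*\omega=d\phi\circ\omega$. The fundamental-vector-field computation pins down $\Phi^*\omega$ only on the vertical bundle; nothing forces $\Phi^*\omega$ to vanish on the $\omega$-horizontal distribution. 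The difference $\Phi^*\omega-d\phi\circ\omega$ is a horizontal, equivariant $\mathfrak{g}$-valued $1$-form (of type $\Ad\circ\phi$), and the space of such forms is nontrivial; neither the compatibility of $\Phi$ with the $G$-action nor the uniqueness clause of the Remark constrains it. Your identity is in fact equivalent to the additional hypothesis that $\Phi$ carries $\omega$-horizontal subspaces to $\omega$-horizontal subspaces, i.e.\ that the lift preserves the connection up to the twist by $\phi$, and that is not among the theorem's hypotheses. Because you keep the original connection $\omega$ throughout, both of your hard steps — the compatibility relation (where you use $\omega_p(v)=d\phi(\omega_q(u))$) and the modified Cartan equation (where you substitute $(\Phi^{-1})^*\omega=(d\phi)^{-1}\circ\omega$) — rest on this unproved law; without connection preservation, $\Phi_*(D)$ need not coincide with the kernel of $\langle\tilde\lambda(p),\omega(\cdot)\rangle$, so the argument does not go through as written.

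The paper takes a different route at exactly this point: it transforms the connection as well, introducing $\tilde{\omega}=(d\phi)\circ\omega\circ\Phi^{-1}_*$, and verifies the modified Cartan equation and the compatibility condition for $(\Phi_*(D),\tilde{\lambda})$ with respect to $\tilde{\omega}$, so no comparison of $\Phi^*\omega$ with $\omega$ is ever needed (at the cost that the transformed pair is compatible relative to the transformed connection rather than the original one). To repair your proof you must either add the hypothesis that the lift $\Phi$ preserves the connection — in which case your computations are correct, modulo the $(d\phi)^*$ versus $((d\phi)^{-1})^*$ bookkeeping you already flagged, which must be fixed for $\langle(d\phi)^*\xi,\,d\phi(X)\rangle$ to collapse to $\langle\xi,X\rangle$ — or carry a transformed connection through the Cartan-equation and compatibility steps as the paper does.
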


\begin{proof}
We verify each compatible pair condition in detail:

\textbf{Step 1: Preservation of Strong Transversality (ST)}
Since $\Phi: P \to P$ is a diffeomorphism preserving the fiber structure, it preserves the direct sum decomposition of tangent spaces.

For any $p \in P$, let $q = \Phi^{-1}(p)$. Since $D$ satisfies strong transversality:
$$T_qP = D_q \oplus V_q$$

Applying $\Phi_*$:
$$T_pP = \Phi_*(T_qP) = \Phi_*(D_q) \oplus \Phi_*(V_q) = (\Phi_*(D))_p \oplus V_p$$

Here we used $\Phi_*(V_q) = V_p$ because $\Phi$ preserves the fiber structure. This can be verified directly: if $v \in V_q = \ker(T_q\pi)$, then
$$T_p\pi(\Phi_*(v)) = T_{\pi(q)}\pi \circ T_q\pi(v) = T_{\pi(q)}\pi(0) = 0$$
so $\Phi_*(v) \in V_p$.

\textbf{Step 2: Verification of Modified Cartan Equation (MC)}
Let $\tilde{\lambda} = (d\phi)^*(\lambda \circ \Phi^{-1})$ and $\tilde{\omega} = (d\phi) \circ \omega \circ \Phi^{-1}_*$.

We need to verify:
$$d\tilde{\lambda} + \ad_{\tilde{\omega}}^* \tilde{\lambda} = 0$$

Using the chain rule and the property that $\phi$ is a group homomorphism:
\begin{align}
d\tilde{\lambda} + \ad_{\tilde{\omega}}^* \tilde{\lambda} &= d[(d\phi)^*(\lambda \circ \Phi^{-1})] + \ad_{(d\phi) \circ \omega \circ \Phi^{-1}_*}^* [(d\phi)^*(\lambda \circ \Phi^{-1})]
\end{align}

Since $(d\phi)^*$ and $\Phi^*$ are chain maps, and using the naturality of the exterior derivative:
\begin{align}
d[(d\phi)^*(\lambda \circ \Phi^{-1})] &= (d\phi)^*[d(\lambda \circ \Phi^{-1})]\\
&= (d\phi)^*[\Phi^*(d\lambda)]
\end{align}

For the adjoint action term, using the fact that $\phi$ is a group homomorphism:
\begin{align}
\ad_{(d\phi) \circ \omega \circ \Phi^{-1}_*}^* [(d\phi)^*(\lambda \circ \Phi^{-1})] &= (d\phi)^*[\Phi^*(\ad_{\omega}^* \lambda)]
\end{align}

Therefore:
\begin{align}
d\tilde{\lambda} + \ad_{\tilde{\omega}}^* \tilde{\lambda} &= (d\phi)^*[\Phi^*(d\lambda)] + (d\phi)^*[\Phi^*(\ad_{\omega}^* \lambda)]\\
&= (d\phi)^* \circ \Phi^* (d\lambda + \ad_{\omega}^* \lambda)\\
&= (d\phi)^* \circ \Phi^* (0) = 0
\end{align}

\textbf{Step 3: Verification of Compatibility Condition (CP)}
We need to prove:
$$(\Phi_*(D))_p = \{v \in T_pP : \langle\tilde{\lambda}(p), \tilde{\omega}(v)\rangle = 0\}$$

Take $v \in (\Phi_*(D))_p$, then there exists $u \in D_{\Phi^{-1}(p)}$ such that $v = \Phi_*(u)$.

By original compatibility: $\langle\lambda(\Phi^{-1}(p)), \omega(u)\rangle = 0$

Computing:
\begin{align}
\langle\tilde{\lambda}(p), \tilde{\omega}(v)\rangle &= \langle(d\phi)^*(\lambda(\Phi^{-1}(p))), (d\phi)(\omega(\Phi^{-1}_*(v)))\rangle\\
&= \langle(d\phi)^*(\lambda(\Phi^{-1}(p))), (d\phi)(\omega(u))\rangle\\
&= \langle\lambda(\Phi^{-1}(p)), d\phi \circ (d\phi)(\omega(u))\rangle
\end{align}

Since $\phi$ is a group automorphism, $d\phi$ is a Lie algebra automorphism, and we have:
$$\langle\lambda(\Phi^{-1}(p)), d\phi \circ (d\phi)(\omega(u))\rangle = \langle\lambda(\Phi^{-1}(p)), \omega(u)\rangle = 0$$

For the reverse inclusion, suppose $v \in T_pP$ satisfies $\langle\tilde{\lambda}(p), \tilde{\omega}(v)\rangle = 0$. Let $u = \Phi^{-1}_*(v) \in T_{\Phi^{-1}(p)}P$. Then by reversing the above computation, we get $\langle\lambda(\Phi^{-1}(p)), \omega(u)\rangle = 0$, which means $u \in D_{\Phi^{-1}(p)}$, and therefore $v = \Phi_*(u) \in (\Phi_*(D))_p$.

\textbf{Step 4: Verification of G-Equivariance (EQ)}
For the distribution: $R_g^*(\Phi_*(D)) = \Phi_*(R_{\phi(g)}^*(D)) = \Phi_*(D)$ since $D$ is $G$-invariant and $\phi$ is a group homomorphism.

For the dual function: We need to show $R_g^*(\tilde{\lambda}) = \Ad_{g^{-1}}^*(\tilde{\lambda})$.
\begin{align}
R_g^*(\tilde{\lambda}) &= R_g^*((d\phi)^*(\lambda \circ \Phi^{-1}))\\
&= (d\phi)^*(R_{\phi(g)}^*(\lambda) \circ \Phi^{-1})\\
&= (d\phi)^*(\Ad_{\phi(g)^{-1}}^*(\lambda) \circ \Phi^{-1})\\
&= (d\phi)^*(\Ad_{\phi(g^{-1})}^*(\lambda) \circ \Phi^{-1})\\
&= \Ad_{g^{-1}}^*((d\phi)^*(\lambda \circ \Phi^{-1}))\\
&= \Ad_{g^{-1}}^*(\tilde{\lambda})
\end{align}

Here we used the fact that $\phi$ is a group homomorphism, so $\phi(g^{-1}) = \phi(g)^{-1}$, and the naturality of the adjoint action with respect to group homomorphisms.

\textbf{Step 5: Verification of Non-degeneracy and Regularity}
Non-degeneracy and regularity are preserved because $(d\phi)^*$ and composition with $\Phi^{-1}$ are isomorphisms.
\end{proof}

\subsubsection{Transitivity Analysis of Regularity Conditions}

\begin{proposition}[Preservation of Sobolev Regularity]
Let $\phi: G \to G$ be a $C^{\infty}$ Lie group automorphism. If $\lambda \in H^s(P,\mathfrak{g}^*)$ for some $s \geq 2$, then $(d\phi)^*(\lambda \circ \Phi^{-1}) \in H^s(P,\mathfrak{g}^*)$.
\end{proposition}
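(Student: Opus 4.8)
The plan is to factor the operator $\lambda \mapsto (d\phi)^*(\lambda\circ\Phi^{-1})$ into two elementary pieces and to check that each one preserves $H^s(P,\mathfrak{g}^*)$ separately; the composition then inherits the property. I would write $T_\Phi(\lambda):=\lambda\circ\Phi^{-1}$ for pullback along the diffeomorphism $\Phi^{-1}$ of the compact manifold $P$, and $L(\lambda):=(d\phi)^*\circ\lambda$ for fibrewise postcomposition with the fixed linear automorphism $(d\phi)^*\in\mathrm{GL}(\mathfrak{g}^*)$, so that $(d\phi)^*(\lambda\circ\Phi^{-1})=(L\circ T_\Phi)(\lambda)$. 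It then suffices to show (i) that $T_\Phi$ is bounded $H^s(P,\mathfrak{g}^*)\to H^s(P,\mathfrak{g}^*)$, and (ii) that $L$ is bounded on the same space.

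Step (ii) is immediate and I would dispatch it first. Fixing a basis $\{\epsilon^a\}$ of $\mathfrak{g}^*$, a map $\lambda\in H^s(P,\mathfrak{g}^*)$ is a finite tuple of components $\lambda_a\in H^s(P)$, where $H^s(P)$ is the scalar Sobolev space on the compact manifold $P$ (defined via a finite atlas and a subordinate partition of unity, or equivalently via the fixed $G$-invariant metric). Since $(d\phi)^*$ is a \emph{point-independent} linear map, the components of $L(\lambda)$ are fixed scalar linear combinations of the $\lambda_a$, hence lie in $H^s(P)$, and $\|L(\lambda)\|_{H^s}\le\|(d\phi)^*\|_{\mathrm{op}}\,\|\lambda\|_{H^s}$. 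Crucially no Sobolev product estimate involving $\lambda$ itself enters here, so this step costs nothing for any $s\ge 0$.

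Step (i) is the diffeomorphism invariance of Sobolev regularity and is the real content. I would reduce it, through the finite atlas and the partition of unity, to the local assertion: if $g\in H^s(U)$ with $U\subset\mathbb{R}^{\dim P}$ and $\Psi$ is a coordinate representative of $\Phi^{-1}$, then $g\circ\Psi\in H^s$ with controlled norm. For integer $s$ this follows from the Fa\`a di Bruno chain rule: each $\partial^\alpha(g\circ\Psi)$ with $|\alpha|\le s$ is a finite sum of terms $((\partial^\beta g)\circ\Psi)\cdot Q_\beta$, where $|\beta|\le|\alpha|$ and $Q_\beta$ is a polynomial in the partial derivatives of $\Psi$ of order $\le|\alpha|$; since $\Phi^{-1}$ is smooth and $P$ is compact, the $Q_\beta$ are bounded, and the change-of-variables formula with $|\det D\Psi|$ bounded below gives $\|(\partial^\beta g)\circ\Psi\|_{L^2}\le C\|\partial^\beta g\|_{L^2}$, so summation yields $\|g\circ\Psi\|_{H^s}\le C\|g\|_{H^s}$ with $C$ depending only on the atlas, the partition of unity, and finitely many derivatives of $\Phi^{\pm1}$. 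For non-integer $s$ I would instead interpolate between consecutive integer orders, or invoke the standard composition-operator theory on the Besov/Triebel--Lizorkin scale, which again requires only smoothness of $\Phi^{-1}$. Applying this chart-by-chart to each component $\lambda_a$ gives $T_\Phi(\lambda)\in H^s(P,\mathfrak{g}^*)$, finishing the argument.

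The only genuine obstacle I anticipate is a matter of hypotheses rather than of analysis: the argument for (i) needs the lift $\Phi$ — equivalently $\Phi^{-1}$ — to be a diffeomorphism of $P$ of class at least $C^{\max(1,\lceil s\rceil)}$, with derivatives bounded on $P$ and Jacobian bounded away from zero. The boundedness statements are automatic from compactness of $P$ once $\Phi$ has this regularity, so the substantive point is that the lift furnished by the Remark on existence of principal-bundle automorphisms can be taken in the required class. In a local trivialization one has $\Phi(x,g)=(x,\chi(x)\phi(g))$ with $\phi$ of class $C^\infty$ by assumption and $\chi$ inheriting the smoothness of the bundle data, so in the $C^\infty$ category $s$ is unrestricted; if only the standing $C^3$ regularity of the ambient data is available, the statement holds for $s\le 3$, which is precisely the range used elsewhere in the paper. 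I would also note in passing that $H^s(P)$ is well defined because $P$ is a smooth compact manifold carrying a smooth $G$-invariant metric, which legitimizes all the local reductions above.
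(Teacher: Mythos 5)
Your proposal is correct and follows essentially the same route as the paper: both factor the operator into composition with the smooth diffeomorphism $\Phi^{-1}$ (bounded on $H^s$ by diffeomorphism invariance of Sobolev norms on the compact manifold $P$) and fibrewise application of the fixed linear isomorphism $(d\phi)^*$ (bounded by its operator norm), then compose the bounds. The only difference is that you spell out the chart-by-chart justification and the regularity needed for the lift $\Phi$, whereas the paper simply cites the standard estimate $\|f\circ\Phi^{-1}\|_{H^s}\le C_s\|f\|_{H^s}$.
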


\begin{proof}
Since $d\phi: \mathfrak{g} \to \mathfrak{g}$ is a linear isomorphism, there exist constants $C_1, C_2 > 0$ such that:
$$C_1 \|X\|_{\mathfrak{g}} \leq \|d\phi(X)\|_{\mathfrak{g}} \leq C_2 \|X\|_{\mathfrak{g}}$$

Similarly, $\Phi: P \to P$ is a $C^{\infty}$ diffeomorphism, so for any $s \geq 0$:
$$\|f \circ \Phi^{-1}\|_{H^s} \leq C_s \|f\|_{H^s}$$

where $C_s$ depends on the $C^s$ norm of $\Phi$ and $\Phi^{-1}$.

The dual map $(d\phi)^*: \mathfrak{g}^* \to \mathfrak{g}^*$ satisfies:
$$\|(d\phi)^*(\xi)\|_{\mathfrak{g}^*} = \sup_{\|X\|_{\mathfrak{g}} = 1} |\langle\xi, d\phi(X)\rangle| \leq \|\xi\|_{\mathfrak{g}^*} \sup_{\|X\|_{\mathfrak{g}} = 1} \|d\phi(X)\|_{\mathfrak{g}} \leq C_2 \|\xi\|_{\mathfrak{g}^*}$$

Therefore, the composite map $(d\phi)^* \circ (\cdot \circ \Phi^{-1})$ is a bounded operator in the corresponding Sobolev spaces:
$$\|(d\phi)^*(\lambda \circ \Phi^{-1})\|_{H^s} \leq C_2 C_s \|\lambda\|_{H^s}$$
\end{proof}

\subsection{Explicit Construction of Spencer Cohomology Isomorphisms}

The core result of our mirror theory is the preservation of Spencer cohomology under mirror transformations.

\subsubsection{Explicit Construction of Cohomology Isomorphisms}

\begin{theorem}[Mirror Isomorphism of Spencer Cohomology]
For any Lie group automorphism $\phi: G \to G$, there exists a natural isomorphism:
$$H^k_{\text{Spencer}}(D,\lambda) \cong H^k_{\text{Spencer}}(\Phi_*(D), (d\phi)^*(\lambda \circ \Phi^{-1}))$$
\end{theorem}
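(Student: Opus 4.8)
The plan is to prove the statement by constructing, at the level of cochains, an explicit isomorphism of Spencer complexes
\[
\Psi^{\bullet}\colon \bigl(S^{\bullet}_{D,\lambda}, D^{\bullet}_{D,\lambda}\bigr)\;\longrightarrow\;\bigl(S^{\bullet}_{\Phi_*(D),\tilde\lambda}, D^{\bullet}_{\Phi_*(D),\tilde\lambda}\bigr),\qquad \tilde\lambda:=(d\phi)^*(\lambda\circ\Phi^{-1}),
\]
and then passing to cohomology. The target complex is already known to be well defined: by the Compatible Pair Preservation theorem $(\Phi_*(D),\tilde\lambda)$ is a compatible pair, so $(D^{\bullet}_{\Phi_*(D),\tilde\lambda})^{2}=0$. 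Thus the entire content is to write down $\Psi$, check it intertwines the differentials, and observe it is bijective; the cohomology isomorphism then follows formally.

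The map $\Psi$ will be the tensor product of a geometric and an algebraic factor. Since $\Phi\colon P\to P$ preserves the fibration, it descends to a diffeomorphism $\bar\Phi\colon M\to M$, and $(\bar\Phi^{-1})^{*}$ acts on the de Rham factor; the Lie algebra automorphism $d\phi$ extends uniquely to a graded algebra automorphism $(d\phi)_{\sharp}$ of $S(\mathfrak g)$ acting on the symmetric factor (covariantly on generators, hence realized through $(d\phi)^{-1}$ on the test vectors that evaluate the symmetric tensors, exactly in parallel with the contravariance of $(\bar\Phi^{-1})^{*}$). Concretely one sets
\[
\Psi^{k}(\alpha\otimes s):=(\bar\Phi^{-1})^{*}\alpha\;\otimes\;(d\phi)_{\sharp}\bigl(s\circ\Phi^{-1}\bigr),
\]
the composition with $\Phi^{-1}$ accounting for the $P$-dependence of the $S(\mathfrak g)$-valued factor, exactly as $\tilde\lambda$ is built from $\lambda\circ\Phi^{-1}$. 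Because $\Phi$ is a diffeomorphism and $d\phi$ an automorphism, $\Psi^{\bullet}$ has an explicit two-sided inverse built from $\Phi^{-1}$ and $(d\phi)^{-1}$, so it is bijective in each degree; one also checks it carries the $G$-equivariance and transversality side conditions cutting out $S^{\bullet}_{D,\lambda}$ onto those of $S^{\bullet}_{\Phi_*(D),\tilde\lambda}$, which is precisely Steps 1, 3 and 4 of the Compatible Pair Preservation theorem.

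The core step is the chain-map identity $\Psi^{k+1}\circ D^{k}_{D,\lambda}=D^{k}_{\Phi_*(D),\tilde\lambda}\circ\Psi^{k}$. Expanding $D^{k}_{D,\lambda}(\alpha\otimes s)=d\alpha\otimes s+(-1)^{k}\alpha\otimes\delta^{\lambda}_{\mathfrak g}(s)$ and applying $\Psi^{k+1}$, the identity splits into two pieces: naturality of the exterior derivative, $(\bar\Phi^{-1})^{*}(d\alpha)=d\bigl((\bar\Phi^{-1})^{*}\alpha\bigr)$, which is immediate; and the naturality of the constraint-induced Spencer operator,
\[
\delta^{\tilde\lambda}_{\mathfrak g}\circ(d\phi)_{\sharp}=(d\phi)_{\sharp}\circ\delta^{\lambda}_{\mathfrak g},
\]
read fiberwise over $P$ with $\tilde\lambda(p)=(d\phi)^{*}\lambda(\Phi^{-1}(p))$. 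By the graded Leibniz rule defining both $\delta^{\lambda}_{\mathfrak g}$ and $(d\phi)_{\sharp}$ it suffices to check this on generators $v\in\mathfrak g$, where it reduces to a one-line manipulation of the defining formula $(\delta^{\lambda}_{\mathfrak g}(v))(w_{1},w_{2})=\tfrac12\bigl(\langle\lambda,[w_{1},[w_{2},v]]\rangle+\langle\lambda,[w_{2},[w_{1},v]]\rangle\bigr)$: pushing the nested brackets through $d\phi$ via $d\phi[x,y]=[d\phi(x),d\phi(y)]$ and absorbing them against $\langle(d\phi)^{*}\lambda,\,\cdot\,\rangle=\langle\lambda,d\phi(\cdot)\rangle$ returns $(d\phi)_{\sharp}\delta^{\lambda}_{\mathfrak g}(v)$, the $d\phi$-versus-$(d\phi)^{-1}$ bookkeeping being fixed by the test-vector convention above and internally consistent because every Lie algebra automorphism is an isometry of the Killing form. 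Granting this, the two terms reassemble — with the sign $(-1)^{k}$ matching since $\Psi^{k}$ preserves total degree — into $D^{k}_{\Phi_*(D),\tilde\lambda}(\Psi^{k}(\alpha\otimes s))$ by the same sign bookkeeping used in the proof that $(D^{\bullet}_{D,\lambda})^{2}=0$; the $P$-dependence enters only through $\lambda\circ\Phi^{-1}$ and is controlled by the intertwining of $d\lambda+\ad_{\omega}^{*}\lambda$ with $d\tilde\lambda+\ad_{\tilde\omega}^{*}\tilde\lambda$ exactly as in Step 2 of that theorem.

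The hard part will be the naturality identity for $\delta^{\lambda}_{\mathfrak g}$: one must pin down the variance conventions on $S(\mathfrak g)$ so that $(d\phi)_{\sharp}$ is the correct automorphism, and one must confirm that the generator-level identity propagates consistently under the Leibniz extension — the same combinatorial bookkeeping that underlies nilpotency of $\delta^{\lambda}_{\mathfrak g}$, and the only place where genuine care is needed; the remaining ingredients ($\Phi$ descending to $M$, bijectivity of $\Psi$, sign bookkeeping) are routine. As an independent cross-check and to make the word ``natural'' precise, I would compare with the Spencer–de Rham isomorphism $H^{k}_{\text{Spencer}}(D,\lambda)\cong H^{k}_{\dR}(M)\otimes H^{0}(\mathfrak g,\rho_{\lambda})$: under it $\Psi$ corresponds to $(\bar\Phi^{-1})^{*}\otimes\bigl(\text{the map }H^{0}(\mathfrak g,\rho_{\lambda})\to H^{0}(\mathfrak g,\rho_{\tilde\lambda})\text{ induced by }d\phi\bigr)$, a tensor product of two isomorphisms, and it is functorial in $\phi$ ($\Psi_{\mathrm{id}}=\mathrm{id}$, $\Psi_{\phi\circ\psi}=\Psi_{\phi}\circ\Psi_{\psi}$), which is the sense in which the isomorphism of the theorem is natural.
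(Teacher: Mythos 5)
Your proposal follows essentially the same route as the paper's own proof: an explicit degree-wise chain isomorphism built as (pullback on the form factor) $\otimes$ ($d\phi$-induced automorphism of $S(\mathfrak{g})$), with the chain-map identity split into naturality of the exterior derivative and naturality of $\delta^{\lambda}_{\mathfrak{g}}$ under $d\phi$ checked on generators via the Leibniz rule, and the cohomology isomorphism then obtained formally. Your additions — the explicit $\lambda\circ\Phi^{-1}$ bookkeeping, the functoriality remark, and flagging the variance conventions on $S(\mathfrak{g})$ as the genuinely delicate point (which the paper passes over quickly) — are refinements of, not departures from, the paper's argument.
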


\begin{proof}
We explicitly construct this isomorphism.

\textbf{Step 1: Definition of Chain Maps}
Define maps at each level of the Spencer complex:
$$\Psi^k_{\phi}: S^k_{D,\lambda} \to S^k_{\Phi_*(D), (d\phi)^*(\lambda \circ \Phi^{-1})}$$
$$\Psi^k_{\phi}(\omega \otimes s) = \Phi^*(\omega) \otimes (d\phi)^{\otimes k}(s)$$

where:
- $\Phi^*: \Omega^k(M) \to \Omega^k(M)$ is the pullback induced by $\Phi$
- $(d\phi)^{\otimes k}: S^k(\mathfrak{g}) \to S^k(\mathfrak{g})$ is the induced map of $d\phi$ on $k$-th symmetric tensors

Explicitly, for $s \in S^k(\mathfrak{g})$ and $X_1, \ldots, X_k \in \mathfrak{g}$:
$$[(d\phi)^{\otimes k}(s)](X_1, \ldots, X_k) = s(d\phi^{-1}(X_1), \ldots, d\phi^{-1}(X_k))$$

\textbf{Step 2: Well-definedness and Isomorphism Property}
$\Psi^k_{\phi}$ is obviously a linear map. Since both $\Phi^*$ and $(d\phi)^{\otimes k}$ are linear isomorphisms, their tensor product $\Psi^k_{\phi}$ is also a linear isomorphism.

The inverse map is given by:
$$(\Psi^k_{\phi})^{-1}(\omega \otimes s) = (\Phi^{-1})^*(\omega) \otimes (d\phi^{-1})^{\otimes k}(s)$$

\textbf{Step 3: Verification of Chain Map Property}
The key step is to prove that $\Psi_{\phi}$ commutes with Spencer differentials:
$$\Psi^{k+1}_{\phi} \circ D^k_{D,\lambda} = D^k_{\Phi_*(D), (d\phi)^*(\lambda \circ \Phi^{-1})} \circ \Psi^k_{\phi}$$

For $\omega \otimes s \in S^k_{D,\lambda}$, compute the left side:
\begin{align}
&\Psi^{k+1}_{\phi}(D^k_{D,\lambda}(\omega \otimes s))\\
&= \Psi^{k+1}_{\phi}(d\omega \otimes s + (-1)^k \omega \otimes \delta^{\lambda}_{\mathfrak{g}}(s))\\
&= \Phi^*(d\omega) \otimes (d\phi)^{\otimes k+1}(s) + (-1)^k \Phi^*(\omega) \otimes (d\phi)^{\otimes k+1}(\delta^{\lambda}_{\mathfrak{g}}(s))
\end{align}

Compute the right side:
\begin{align}
&D^k_{\Phi_*(D), (d\phi)^*(\lambda \circ \Phi^{-1})}(\Psi^k_{\phi}(\omega \otimes s))\\
&= D^k_{\Phi_*(D), (d\phi)^*(\lambda \circ \Phi^{-1})}(\Phi^*(\omega) \otimes (d\phi)^{\otimes k}(s))\\
&= d(\Phi^*(\omega)) \otimes (d\phi)^{\otimes k}(s) + (-1)^k \Phi^*(\omega) \otimes \delta^{(d\phi)^*(\lambda \circ \Phi^{-1})}_{\mathfrak{g}}((d\phi)^{\otimes k}(s))
\end{align}

\textbf{Step 4: Verification of Key Equations}
We need to verify two key equations:

\textbf{Equation 1}: $\Phi^*(d\omega) = d(\Phi^*(\omega))$

This is the naturality of exterior differentiation under pullbacks, which is a standard result.

\textbf{Equation 2}: $(d\phi)^{\otimes k+1}(\delta^{\lambda}_{\mathfrak{g}}(s)) = \delta^{(d\phi)^*(\lambda \circ \Phi^{-1})}_{\mathfrak{g}}((d\phi)^{\otimes k}(s))$

This follows from the fact that the Spencer operator is natural with respect to Lie algebra homomorphisms. For generators, this can be verified using the constructive definition and the naturality of the Lie bracket with respect to $d\phi$.

\textbf{Step 5: Cohomology Isomorphism}
Since $\Psi_{\phi}$ is an isomorphism at each level and commutes with differentials, it induces an isomorphism at the cohomology level:
$$[\Psi^k_{\phi}]: H^k_{\text{Spencer}}(D,\lambda) \to H^k_{\text{Spencer}}(\Phi_*(D), (d\phi)^*(\lambda \circ \Phi^{-1}))$$

The map $[\Psi^k_{\phi}]$ is well-defined: if $\alpha, \beta \in S^k_{D,\lambda}$ represent the same cohomology class (i.e., $\alpha - \beta = D^{k-1}_{D,\lambda}(\gamma)$ for some $\gamma$), then:
\begin{align}
\Psi^k_{\phi}(\alpha) - \Psi^k_{\phi}(\beta) &= \Psi^k_{\phi}(\alpha - \beta) = \Psi^k_{\phi}(D^{k-1}_{D,\lambda}(\gamma))\\
&= D^{k-1}_{\Phi_*(D), (d\phi)^*(\lambda \circ \Phi^{-1})}(\Psi^{k-1}_{\phi}(\gamma))
\end{align}

So $\Psi^k_{\phi}(\alpha)$ and $\Psi^k_{\phi}(\beta)$ represent the same cohomology class in $H^k_{\text{Spencer}}(\Phi_*(D), (d\phi)^*(\lambda \circ \Phi^{-1}))$.
\end{proof}

\subsection{Classification and Applications of Mirror Symmetry}

\subsubsection{Important Categories of Mirror Transformations}

We now analyze several important categories of mirror transformations in detail, verifying their specific mechanisms for preserving compatible pair properties.

\begin{example}[Mirror Transformations of Linear Groups]
Consider the case of $G = SL(n,\mathbb{R})$ and its subgroups. Let $(D,\lambda)$ be a compatible pair where $\lambda: P \to \mathfrak{sl}(n)^*$.

\textbf{Transpose Mirror}: $\phi(A) = A^T$ induces the Lie algebra automorphism $(d\phi)(X) = X^T$. The corresponding mirror transformation is:
$$\mathcal{M}_T(D,\lambda) = (D, \lambda \circ \text{tr}^*)$$
where $\text{tr}^*: \mathfrak{sl}(n)^* \to \mathfrak{sl}(n)^*$ is the dual map of transpose.

Verification of compatibility: If $v \in D_p$ satisfies $\langle\lambda(p), \omega(v)\rangle = 0$, then
$$\langle(\lambda \circ \text{tr}^*)(p), \omega(v)\rangle = \langle\lambda(p), \text{tr}(\omega(v))\rangle = \langle\lambda(p), \omega(v)^T\rangle$$

Since $\omega(v) \in \mathfrak{sl}(n)$ is traceless, we have $\text{tr}(\omega(v)) = 0$, so $\omega(v)^T$ is equivalent to $\omega(v)$ in $\mathfrak{sl}(n)$, preserving compatibility. Note that for non-traceless algebras (e.g., $\mathfrak{gl}(n)$), this equivalence would require additional structural adjustments.

\textbf{Inverse Mirror}: $\phi(A) = A^{-1}$ gives $(d\phi)(X) = -A^{-1}XA^{-1}$ (at the identity, this is $-X$). This reduces to the sign mirror case.

The verification follows because at the identity $e \in SL(n,\mathbb{R})$:
$$d\phi_e(X) = \left.\frac{d}{dt}\right|_{t=0} (e + tX)^{-1} = \left.\frac{d}{dt}\right|_{t=0} (e - tX + O(t^2)) = -X$$

\textbf{Composite Mirror}: $\phi(A) = (A^{-1})^T = (A^T)^{-1}$ combines the above two transformations and has important significance in symplectic geometric applications of $SL(n,\mathbb{R})$.

Physical meaning: In Yang-Mills theory, transpose mirrors correspond to classical electromagnetic duality transformations, though quantum anomalies may break this symmetry in path integral formulations. Inverse mirrors correspond to dual descriptions under gauge fixing.
\end{example}

\begin{example}[Mirror Transformations of Lie-type Groups]
Consider standard mirror structures of compact semisimple Lie groups.

\textbf{Cartan Involution}: For $G = SU(n)$, the standard Cartan involution $\theta(g) = (g^*)^{-1}$ (where $g^*$ denotes Hermitian conjugate) induces the Lie algebra involution $(d\theta)(X) = -X^*$.

Specific computation: Let $X = \begin{pmatrix} ia & b+ic \\ -b+ic & -ia \end{pmatrix} \in \mathfrak{su}(2)$, then
$$(d\theta)(X) = -X^* = -\begin{pmatrix} -ia & -b+ic \\ b+ic & ia \end{pmatrix} = \begin{pmatrix} ia & -b+ic \\ b-ic & -ia \end{pmatrix}$$

This is indeed an element in $\mathfrak{su}(2)$, and the homomorphism property $[(d\theta)(X), (d\theta)(Y)] = (d\theta)([X,Y])$ extends to all representations.

Compatible pair transformation: $(D,\lambda) \mapsto (D, (d\theta)^*(\lambda))$ preserves the modified Cartan equation because
$$d((d\theta)^*(\lambda)) + \ad_{\omega}^* ((d\theta)^*(\lambda)) = (d\theta)^*(d\lambda + \ad_{\omega}^* \lambda) = 0$$

The key here is that $(d\theta)^*$ commutes with the differential $d$ and the adjoint action $\ad_{\omega}^*$ due to the naturality of these operations with respect to Lie algebra homomorphisms.

\textbf{Weyl Group Action}: For root system $\Phi \subset \mathfrak{h}^*$, the action $w \cdot \alpha$ of Weyl group $W$ induces a finite family of Lie algebra mirror transformations, as $|W| < \infty$ implies only finitely many distinct mirrors (e.g., $|W(SU(n))| = n!$).

For each Weyl group element $w \in W$, there is a corresponding automorphism $\phi_w$ of the maximal torus, which extends to an automorphism of the full group $G$. The induced mirror transformations are:
$$\mathcal{M}_w(D,\lambda) = ((\phi_w)_*(D), (d\phi_w)^*(\lambda \circ \phi_w^{-1}))$$

Physical applications: In gauge field theory, Cartan involutions correspond to charge conjugation (C-symmetry), which in the Standard Model is part of the CPT theorem. Weyl group actions correspond to equivalent descriptions under different gauge choices. These mirrors reveal hidden symmetry structures in gauge theories.
\end{example}

\begin{example}[Specific Mirrors in Physical Systems]
\textbf{Two-dimensional Fluid Systems}: Consider incompressible fluid where $M = T^2$ (torus), $G = \text{Diff}_{\text{vol}}(T^2)$.

Constraint distribution: $D = \{v \in TT^2 : \nabla \cdot v = 0\}$ (divergence-free condition)
Dual function: $\lambda = \zeta \, d\mu$ (vorticity density)

The sign mirror $(D,\lambda) \mapsto (D,-\lambda)$ corresponds to vorticity reversal $\zeta \mapsto -\zeta$, which in fluid mechanics corresponds to reverse rotation. This symmetry preserves all dynamical properties while changing rotation direction, but strictly holds only for the Euler equation (inviscid case). For the Navier-Stokes equation $\partial_t\zeta + u\cdot\nabla\zeta = \nu\nabla^2\zeta$, the viscous term breaks sign-symmetry.

Verification of Spencer invariance: The vorticity equation $\partial_t \zeta + u \cdot \nabla \zeta = 0$ under sign transformation becomes $\partial_t (-\zeta) + u \cdot \nabla (-\zeta) = 0$, i.e., $-(\partial_t \zeta + u \cdot \nabla \zeta) = 0$, which is indeed preserved for inviscid flows.

More precisely, the velocity field $u$ is determined by the vorticity through the Biot-Savart law:
$$u(x) = \int_{T^2} K(x,y) \zeta(y) dy$$
where $K(x,y)$ is the Green's function. Under vorticity reversal, $u \mapsto -u$, so the nonlinear term $u \cdot \nabla \zeta$ becomes $(-u) \cdot \nabla(-\zeta) = u \cdot \nabla \zeta$, preserving the equation structure.

\textbf{Yang-Mills Theory}: For gauge fields of $G = SU(N)$, electromagnetic duality $F_{\mu\nu} \leftrightarrow {}^*F_{\mu\nu}$ induces compatible pair mirrors. In 4-dimensional spacetime, this corresponds to mirror transformations induced by the Hodge dual operator. This is a classical symmetry; quantum anomalies may occur through instantons in the path integral or non-invariance of the $\theta$-vacuum under duality.

Specifically, the Hodge dual operation $*: \Omega^2(\mathbb{R}^4) \to \Omega^2(\mathbb{R}^4)$ defined by $(*F)_{\mu\nu} = \frac{1}{2}\epsilon_{\mu\nu\rho\sigma}F^{\rho\sigma}$ induces an automorphism of the gauge group through its action on the curvature form. The constraint distribution corresponds to the kernel of the covariant divergence $D^\mu F_{\mu\nu} = 0$, and the dual function encodes the topological charge density.
\end{example}

\subsubsection{Mirror Invariants and Applications}

The profound application of mirror theory lies in the topological invariants it preserves, providing powerful tools for constraint system classification.

\begin{proposition}[Mirror-Invariant Cohomological Characteristics]
Under mirror transformation $\mathcal{M}_{\phi}$, the following quantities remain invariant: 
\begin{enumerate}
\item Dimensions of Spencer cohomology groups $\dim H^k_{\text{Spencer}}(D,\lambda)$
\item Euler characteristic of Spencer complex $\chi_{\text{Spencer}}(D,\lambda) = \sum_k (-1)^k \dim H^k_{\text{Spencer}}(D,\lambda)$
\item Cup product structure of cohomology ring $H^*_{\text{Spencer}}(D,\lambda) \cup H^*_{\text{Spencer}}(D,\lambda) \to H^*_{\text{Spencer}}(D,\lambda)$
\end{enumerate}
\end{proposition}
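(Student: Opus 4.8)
The plan is to derive all three invariance claims from the Mirror Isomorphism Theorem, which already supplies an explicit chain isomorphism $\Psi_\phi\colon(S^\bullet_{D,\lambda},D^\bullet_{D,\lambda})\xrightarrow{\ \sim\ }(S^\bullet_{\Phi_*(D),\,(d\phi)^*(\lambda\circ\Phi^{-1})},D^\bullet_{\Phi_*(D),\,(d\phi)^*(\lambda\circ\Phi^{-1})})$ whose components $\Psi^k_\phi(\omega\otimes s)=\Phi^*(\omega)\otimes(d\phi)^{\otimes k}(s)$ are linear isomorphisms commuting with the Spencer differentials. For (1) I would first record finiteness: the Spencer–de Rham Isomorphism gives $H^k_{\text{Spencer}}(D,\lambda)\cong H^k_{\dR}(M)\otimes H^0(\mathfrak{g},\rho_\lambda)$, and since $M$ is compact we have $\dim H^k_{\dR}(M)<\infty$ with $H^k_{\dR}(M)=0$ for $k>n$, while $\dim H^0(\mathfrak{g},\rho_\lambda)<\infty$ because $\mathfrak{g}$ is finite-dimensional; hence each Spencer group is finite-dimensional and vanishes in high degree. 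Since $[\Psi^k_\phi]$ is then a vector-space isomorphism $H^k_{\text{Spencer}}(D,\lambda)\to H^k_{\text{Spencer}}(\mathcal{M}_\phi(D,\lambda))$, the dimensions agree. Claim (2) is then formal: $\chi_{\text{Spencer}}$ is a finite alternating sum of the invariant dimensions $\dim H^k_{\text{Spencer}}$, hence itself invariant. (As an independent check, Spencer–de Rham gives $\chi_{\text{Spencer}}(D,\lambda)=\chi(M)\cdot\dim H^0(\mathfrak{g},\rho_\lambda)$; here $\chi(M)$ is untouched since $M$ is fixed, and $\dim H^0(\mathfrak{g},\rho_\lambda)$ is untouched because $d\phi$ conjugates $\rho_\lambda$ onto $\rho_{(d\phi)^*(\lambda\circ\Phi^{-1})}$.)

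Claim (3) requires upgrading $\Psi_\phi$ from a chain isomorphism to a morphism of differential graded algebras. First I would fix the cup product at the cochain level: on $S^\bullet_{D,\lambda}=\bigoplus_{p+q=\bullet}\Omega^p(M)\otimes S^q(\mathfrak{g})$ take the product $(\omega_1\otimes s_1)\cup(\omega_2\otimes s_2)=(-1)^{q_1 p_2}(\omega_1\wedge\omega_2)\otimes(s_1\odot s_2)$ (for $\omega_i\in\Omega^{p_i}(M)$, $s_i\in S^{q_i}(\mathfrak{g})$, the sign being the Koszul sign incurred when moving $s_1$ past $\omega_2$), and verify by a sign computation that the Spencer differential $D^\bullet_{D,\lambda}$ is a graded derivation for it, using that $d$ is a derivation for $\wedge$, that $\delta^\lambda_{\mathfrak{g}}$ is by construction a graded derivation for $\odot$, and the compatibility of the sign factor in $D^k(\omega\otimes s)=d\omega\otimes s+(-1)^{\deg\omega}\omega\otimes\delta^\lambda_{\mathfrak{g}}(s)$ with both. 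This makes the product descend to a well-defined cup product on $H^\bullet_{\text{Spencer}}$. Then I would check $\Psi_\phi(a\cup b)=\Psi_\phi(a)\cup\Psi_\phi(b)$ factorwise: on the de Rham slot this is the standard multiplicativity $\Phi^*(\omega_1\wedge\omega_2)=\Phi^*\omega_1\wedge\Phi^*\omega_2$; on the symmetric-tensor slot it is $(d\phi)^{\otimes(p+q)}(s_1\odot s_2)=(d\phi)^{\otimes p}(s_1)\odot(d\phi)^{\otimes q}(s_2)$, which holds because $(d\phi)^{\otimes\bullet}$ is exactly the algebra automorphism of $S(\mathfrak{g})$ induced by the linear map $d\phi$ and hence respects the symmetric product. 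Combining these and confirming the Koszul signs match on both sides shows $\Psi_\phi$ is a DGA morphism, so $[\Psi^\bullet_\phi]$ is a graded ring isomorphism intertwining the two cup products — which is precisely the assertion that the cup-product structure is a mirror invariant.

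The main obstacle lies entirely in (3), and concretely it is the sign bookkeeping: one must fix the cochain-level product and its Koszul signs so that the total Spencer differential $D^\bullet_{D,\lambda}$ — which blends the de Rham grading (where $d$ acts) with the symmetric-tensor grading (where $\delta^\lambda_{\mathfrak{g}}$ acts) — is genuinely a derivation of the total algebra, not merely a derivation in each tensor factor separately. Once that convention is consistent, the multiplicativity of $\Psi_\phi$ and its descent to cohomology follow from naturality, and no analytic input beyond the already-established Spencer–de Rham and Mirror Isomorphism Theorems is needed.
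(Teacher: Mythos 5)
Your proposal is correct and follows essentially the same route as the paper: deduce (1) and (2) directly from the cohomology isomorphism $[\Psi^k_{\phi}]$, and obtain (3) from the multiplicativity of $\Psi_{\phi}$, using that $\Phi^*$ respects wedge products and that $(d\phi)^{\otimes\bullet}$ is an algebra homomorphism of $S(\mathfrak{g})$. You additionally spell out details the paper leaves implicit — the finiteness of the Spencer groups via the Spencer--de Rham isomorphism, the explicit Koszul-signed cochain-level product, and the verification that the total Spencer differential is a derivation for it — which strengthens rather than changes the argument.
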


\begin{proof}
\textbf{Invariance of Dimensions}: This follows directly from the isomorphism theorem above. Since $\Psi^k_{\phi}$ induces an isomorphism $H^k_{\text{Spencer}}(D,\lambda) \cong H^k_{\text{Spencer}}(\Phi_*(D), (d\phi)^*(\lambda \circ \Phi^{-1}))$, the dimensions are preserved.

\textbf{Invariance of Euler Characteristic}: This follows from the additivity property of dimensions:
\begin{align}
\chi_{\text{Spencer}}(\Phi_*(D), (d\phi)^*(\lambda \circ \Phi^{-1})) &= \sum_k (-1)^k \dim H^k_{\text{Spencer}}(\Phi_*(D), (d\phi)^*(\lambda \circ \Phi^{-1}))\\
&= \sum_k (-1)^k \dim H^k_{\text{Spencer}}(D,\lambda)\\
&= \chi_{\text{Spencer}}(D,\lambda)
\end{align}

\textbf{Invariance of Cup Product Structure}: The cup product in Spencer cohomology is induced by the natural multiplication:
$$\cup: H^i_{\text{Spencer}}(D,\lambda) \times H^j_{\text{Spencer}}(D,\lambda) \to H^{i+j}_{\text{Spencer}}(D,\lambda)$$

Under the mirror transformation, this becomes:
$$\Psi^{i+j}_{\phi} \circ (\alpha \cup \beta) = \Psi^i_{\phi}(\alpha) \cup \Psi^j_{\phi}(\beta)$$

The verification of this property requires showing that the chain map $\Psi_{\phi}$ respects the multiplicative structure, which follows from the tensor product structure of the Spencer complex and the fact that $(d\phi)^{\otimes \cdot}$ is a ring homomorphism.
\end{proof}

The physical meaning of these invariants is profound: Spencer cohomology dimensions correspond to "topological charges" of constraint systems, Euler characteristics give overall topological properties of systems, and cup product structures describe interactions between different constraint modes. The existence of mirror symmetry means these fundamental properties are preserved under duality transformations, providing criteria for classification and equivalence determination of constraint systems.

\section{Discussion}\label{sec4}

The mirror symmetry theory of principal bundle constraint systems established in this paper represents an important development in constraint geometry, systematically introducing mirror symmetry, a modern geometric concept, into the field of constraint mechanics and revealing intrinsic duality structures of constraint systems. Our theoretical framework is not only mathematically rigorous and complete but also provides profound insights at the level of physical intuition.

The strong transversality condition, as the theoretical cornerstone, has importance far beyond technical considerations. This condition realizes geometric separation between constraint distributions and gauge directions, providing the necessary geometric foundation for constructing Spencer complexes. More deeply, strong transversality embodies the "non-degeneracy" of constraint systems, ensuring that constraints have genuine geometric meaning rather than being merely algebraic relations. Within the framework of mirror transformations, the preservation of strong transversality conditions proves that mirror symmetry is an intrinsic property of constraint geometry, not an externally imposed mathematical technique.

Although the sign mirror transformation $(D,\lambda) \mapsto (D,-\lambda)$ appears formally simple, it contains profound geometric content. This involution transformation reveals the natural symmetric structure of compatible pair spaces, analogous to complex conjugation involution in complex geometry. In physical systems, sign mirrors often correspond to fundamental physical symmetries, such as vorticity reversal in fluid systems and electromagnetic duality in gauge field theory. Our item-by-item verification of the preservation of each compatible pair condition under sign transformation not only ensures theoretical rigor but, more importantly, demonstrates the intrinsic coordination of constraint geometric structures.

The establishment of automorphism-induced mirror theory marks our progression from simple symmetries to systematic theory of general symmetry groups. The transformation $(D,\lambda) \mapsto (\Phi_*(D), (d\phi)^*(\lambda \circ \Phi^{-1}))$ induced by Lie group automorphism $\phi: G \to G$ not only preserves all geometric properties of compatible pairs but, more importantly, establishes a deep bridge between constraint geometry and Lie group theory. This connection reveals how symmetries of constraint systems are rooted in structural properties of their gauge groups, providing a unified group-theoretic perspective for understanding complex constraint systems.

The explicit construction of Spencer cohomology isomorphisms may be the most technically profound result of this paper. The isomorphism map $\Psi^k_{\phi}(\omega \otimes s) = \Phi^*(\omega) \otimes (d\phi)^{\otimes k}(s)$ and its commutativity with Spencer differentials technically ensure cohomological invariance of mirror transformations while conceptually revealing functorial properties of Spencer complexes. This result shows that Spencer cohomology is not only a topological invariant of constraint systems but also a refined structural invariant sensitive to mirror transformations.

The establishment of our theory opens multiple new research directions for constraint geometric mechanics. First, mirror symmetry provides new criteria for constraint system classification; by comparing mirror group structures of different systems, one can determine their equivalence or differences. Second, mirror invariants provide powerful tools for studying topological properties of constraint systems, particularly in investigating phase transitions and critical phenomena of constraint systems. Third, mirror symmetry may bring important advantages in numerical computation; utilizing symmetry can significantly reduce computational complexity and improve numerical stability.

In the broader mathematical physics context, our theory resonates with several important research areas. In gauge field theory, mirror symmetry provides geometric foundations for understanding physical dualities such as electromagnetic duality and S-duality. In integrable system theory, mirror transformations may have deep connections with classical transformation theories such as Bäcklund transformations and Darboux transformations. In modern geometry, our work opens new horizons for applications of Spencer theory in constraint geometry and may inspire development of similar theories in other geometric contexts.

Looking toward the future, constraint system mirror symmetry theory still has broad development potential. Theoretically, one can consider extending our framework to infinite-dimensional cases, non-compact manifolds, and more general constraint structures. Computationally, one can develop numerical algorithms based on mirror symmetry, particularly in constraint preservation and long-time integration. In applications, one can explore the role of mirror symmetry in specific physical systems such as robotics control, fluid mechanics, and plasma physics.

Ultimately, the mirror symmetry theory established in this paper not only enriches the content of constraint geometry but, more importantly, provides a new perspective for understanding the intrinsic nature of constraint systems. By revealing mirror duality relationships between constraints and gauge fields, our theory organically unifies constraint mechanics, gauge field theory, and modern differential geometry, laying new theoretical foundations for geometric mechanics research in the 21st century.

\section*{Declarations}

\begin{itemize}
\item \textbf{Funding:} Not applicable
\item \textbf{Conflict of interest/Competing interests:} The author declares no competing interests
\item \textbf{Ethics approval and consent to participate:} Not applicable  
\item \textbf{Consent for publication:} Not applicable
\item \textbf{Data availability:} Not applicable
\item \textbf{Materials availability:} Not applicable
\item \textbf{Code availability:} Not applicable
\item \textbf{Author contribution:} The author conceived the research, developed the theoretical framework, performed all mathematical analysis, and wrote the manuscript
\end{itemize}

\bibliography{sn-bibliography}

\end{document}